\numberwithin{equation}{section}
\pgfplotsset{compat=1.15}
\declaretheorem[style=plain,name=Theorem,numberwithin=section]{theorem}
\declaretheorem[style=plain,name=Proposition,sibling=theorem]{proposition}
\declaretheorem[style=plain,name=Lemma,sibling=theorem]{lemma}
\declaretheorem[style=plain,name=Problem,sibling=theorem]{problem}
\declaretheorem[style=plain,name=Corollary,sibling=theorem]{cor}
\declaretheorem[style=plain,name=Construction,sibling=theorem]{construction}
\declaretheorem[style=plain,name=Observation,sibling=theorem]{observation}
\declaretheorem[style=definition,name=Definition,sibling=theorem]{definition}
\declaretheorem[style=definition,name=Example,sibling=theorem]{example}	
\declaretheorem[style=definition,name=Question,sibling=theorem]{question}
\declaretheorem[style=definition,name=Remark,sibling=theorem]{remark}
\newcommand{\nn}{\mathbb{N}}
\newcommand{\zz}{\mathbb{Z}}
\newcommand{\rr}{\mathbb{R}}
\newcommand{\ff}{\mathbb{F}}
\newcommand{\F}{\mathbb{F}}
\newcommand{\eps}{\varepsilon}
\newcommand{\lb}{[}
\newcommand{\rb}{]}
\newcommand{\cB}{\mathcal{B}}
\newcommand{\cG}{\mathcal{G}}
\newcommand{\cN}{\mathcal{N}}
\newcommand{\cNi}{\mathcal{N}i}
\DeclareMathOperator{\Sp}{Sp}
\title{Cardinalities of the total number of independent sets}
\author{Benedek Kovács\thanks{ELTE Linear Hypergraphs Research Group, Eötvös Loránd University, Budapest, Hungary. Supported by the EKÖP-25 University Excellence Scholarship Program of the Ministry for Culture and Innovation from the source of the National Research, Development and Innovation Fund and the University Excellence Fund of Eötvös Loránd University.
		E-mail: {\tt benoke981@gmail.com}}
	\and Zolt\'an L\'or\'ant Nagy\thanks{ELTE Linear Hypergraphs  Research Group,
		E\"otv\"os Lor\'and University, Budapest, Hungary. The author is supported by the University Excellence Fund of Eötvös Loránd University.	E-mail: {\tt nagyzoli@cs.elte.hu}}
  }
\date{}
\begin{document}

\maketitle

\begin{abstract}
We study the set of numbers the total number of independent sets can admit in $n$-vertex graphs. In this paper, we prove that the cardinality $\mathcal{N}i(n)$ of this set is very close to $2^n$ in the following sense: $\mathcal{N}i(n)/2^n= O(n^{-1/5})$ while  for infinitely many $n$, we have  $\log_2(\mathcal{N}i(n)/2^n)\geq -2^{(1+o(1))\sqrt{\log_2 n}}$. This set is also precisely the set of possible values of the independence polynomial $I_G(x)$ at $x=1$ for $n$-vertex graphs $G$. As an application, we address an additive combinatorial problem on subsets of a given vector space
that avoid certain intersection patterns with respect to subspaces.
\\
    Keywords: independence polynomial, independent sets
\end{abstract}

\section{Introduction}

For a graph $G=(V,E)$ on $n$ vertices, a subset $W\subseteq V$ is called \textit{independent} (or \textit{stable}) if it does not induce any edges. Denote by $I(G)$ the set of independent subsets of $G$, and let $i(G)=|I(G)|$ denote the number of independent subsets in $G$, which is also called the \textit{Fibonacci number} of the graph $G$ after Prodinger and Tichy \cite{Fibonacci}, since the sequence $(i(P_n))$ returns the Fibonacci sequence for paths $P_n$.

The investigation of  the number of independent sets in  graph families has a rich history with deep mathematical results and significant applications in various combinatorial and computational domains. Understanding these numbers and the properties of extremal graphs provides valuable insights into the structure and behaviour of graphs.
As for extremal results, we recall that  Moon and Moser proved a cornerstone theorem concerning the maximum number  of maximal independent sets in $n$-vertex graphs \cite{MM}. They also proved sharp results on the number of different sizes of maximal independent sets that an $n$-vertex graph can admit. Some related results are \cite{Erdos, NZL, Spencer, Wood}. The celebrated result of Kahn \cite{Kahn} showed that disjoint unions of complete bipartite graphs $K_{r,r}$ have the maximum number of independent sets among $r$-regular bipartite graphs on $n$ vertices, which was  extended by Zhao  to all
$r$-regular graphs \cite{Zhao}. Later, Sah, Sawhney, Stoner and Zhao described the maximum  number of independent sets in  non-regular graphs as well \cite{Sah}. 
For a relatively recent survey on counting independent sets, we refer to the work of Samotij \cite{Samotij}, see also \cite{ Mink, Cutler, DJPR}. 
Counting independent sets is a natural combinatorial problem with connections to other counting problems in graphs and hypergraphs. A very important and efficient related tool is the famous \textit{container method}, developed by  Saxton and Thomason \cite{Container2} and independently by Balogh, Morris and Samotij \cite{Container1}. It is
particularly useful when one aims to obtain upper bounds on the number of independent sets in sparse graphs or graphs with certain local properties. The core idea is to show that all such independent sets are contained in a relatively small number of larger so-called containers that have a more manageable structure, or size. 
While several applications require the understanding of \textit{the typical} \textit{or the extremal number} of independent sets in certain graph families, together with their typical or extremal structural properties, our aim is to describe the\textit{ set of all possible numbers} that can be attained as the Fibonacci number of an $n$-vertex graph.

We denote the family of (unlabelled) $n$-vertex graphs by $\cG_n$.
Since the empty set and the one-element subsets are always independent, $n+1\le i(G)\le 2^n$ clearly holds for any $G\in \cG_n$. As mentioned earlier, $\max\{ i(G): G\in \cG\}$ and $\min\{ i(G): G\in \cG\}$ have been in the centre of study for various families $\cG$ of graphs.
Our main goal is to give upper and lower bounds on the size of the whole Fibonacci spectrum 
$\{ i(G): G\in \cG_n\}$, for which we introduce

\begin{definition}
    $$\cNi(n):=|\{ i(G): G\in \cG_n\}|.$$
\end{definition}

To study the enumerative properties of independent sets, the \textit{independence polynomial} of a graph $G$, denoted by $I_G(x)$, is defined 
as the polynomial whose coefficient for $x^k$ is given by the number of independent sets of order $k$ in $G$, denoted $i_k(G)$, that is, $$I_G(x) = \sum_{k\ge 0} i_k(G)x^{k}.$$
 This polynomial was initially defined by Gutman and Harary \cite{Gutman} and is also known as the independent set polynomial or stable set polynomial. For a survey on the independence polynomial, we refer to the work of Levit and Mandrescu \cite{Levit}.
Using this terminology, our main problem is equivalent to the following.

\begin{problem}
   Determine the order of magnitude of $|\{I_G(1): G\in \cG_n\}|.$
\end{problem}
Our main result is the following.

\begin{theorem}\label{thm:main}
For all $n\in \zz^{+}$, we have
$$\cNi(n)=O(2^n n^{-0.2075}),$$
 while for infinitely many $n\in \zz^{+}$, we have 
 $$\scalebox{1.25}{$\cNi(n)\ge 2^{n-2^{(1+o(1))\sqrt{\log_2 n}}}.$}$$
\end{theorem}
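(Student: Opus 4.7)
The plan is to prove the two bounds of Theorem~\ref{thm:main} separately. Both arguments rest on the structural decomposition $G=I_{m}\sqcup G'$ of any $n$-vertex graph into its $m$ isolated vertices and its \emph{core} $G'$ on the remaining vertices (so $\delta(G')\ge 1$), which gives $i(G)=2^{m}\,i(G')$ and the starting inequality
\[
\cNi(n)\;\le\;\sum_{k=0}^{n}\bigl|\phi(k)\bigr|,\qquad \phi(k):=\{i(G'):G'\in \cG_{k}^{0}\},
\]
where $\cG_{k}^{0}$ denotes the family of $k$-vertex graphs with no isolated vertex.

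For the upper bound the key quantitative input is the inequality $i(G)\le (3/4)^{\mu(G)}\cdot 2^{n}$: fixing a maximum matching $M$ of size $\mu(G)$, each of its vertex-disjoint edges independently disqualifies one quarter of the subsets from being independent. Thus graphs $G$ with $i(G)$ close to $2^{n}$ must have small matching number. I would then split $[1,2^n]$ into dyadic strips $[2^{n-t-1},2^{n-t}]$ and bound the number of achievable $i$-values in each strip using a combination of (i) the matching-number restriction, (ii) the trivial length-of-strip bound $2^{n-t-1}$, and (iii) a recursive estimate on $|\phi(k)|$ obtained by applying the same argument inside the core. Balancing these three contributions should give the claimed $O(2^{n}/n^{0.2075})$, with the exponent $0.2075 = 1-\tfrac{1}{2}\log_{2} 3$ emerging as the optimiser of the corresponding min-max problem (it is precisely the rate at which $(3/4)^{\mu}\cdot 2^{n}$ decreases relative to $2^{n}$ per additional matched edge).

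For the lower bound I would construct many pairwise distinct Fibonacci values using disjoint unions of well-chosen building blocks. Fix $s=s(n)$ to be optimised, and choose graphs $H_{1},\ldots,H_{r}\in \cG_{s}^{0}$ whose Fibonacci numbers $a_{j}=i(H_{j})$ are pairwise distinct \emph{odd primes}. For each tuple $\mathbf{c}=(c_{1},\ldots,c_{r})\in\mathbb{Z}_{\ge 0}^{r}$ with $s\sum_{j}c_{j}\le n$ set
\[
G_{\mathbf{c}}\;=\;I_{\,n-s\sum_{j}c_{j}}\;\sqcup\;\bigsqcup_{j=1}^{r}c_{j}H_{j},
\]
so that $i(G_{\mathbf{c}})=2^{\,n-s\sum_{j}c_{j}}\prod_{j}a_{j}^{c_{j}}$. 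Since each $a_{j}$ is odd, the $2$-adic valuation of $i(G_{\mathbf{c}})$ recovers $\sum_{j}c_{j}$, and unique factorisation into the primes $a_{j}$ then recovers $\mathbf{c}$ itself, so these values are pairwise distinct. A straightforward count gives $\binom{\lfloor n/s\rfloor+r}{r}$ distinct values; choosing $s\sim n\cdot(\log_{2} n)/2^{\sqrt{\log_{2} n}}$ and $r$ of order $2^{s}/s$ (i.e.\ the order of the prime-counting function at $2^{s}$) turns this binomial coefficient into $2^{\,n-(1+o(1))\cdot 2^{\sqrt{\log_{2} n}}}$, as a short computation with $\log_{2}\binom{a+r}{r}\approx a\log_{2}(r/a)$ verifies.

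The two main obstacles are as follows. For the upper bound, the qualitative implication ``large $i(G)\Rightarrow$ small matching'' must be sharpened into a quantitative bound on $|\phi(k)|$; this requires the matching argument to be iterated inside the core and, crucially, one must show that the dominant contribution comes from $t=\Theta(\log n)$, since for larger $t$ the trivial strip-length bound $2^{n-t}$ dominates and for smaller $t$ the core is too small to contribute many values. For the lower bound, the main delicacy is certifying the existence of $\Omega(2^{s}/s)$ distinct odd-prime Fibonacci numbers on $s$ vertices when $s$ is forced to be nearly as large as $n$; this appears to require a density/equidistribution argument combining the prime number theorem with an explicit construction that perturbs the star $K_{1,s-1}$ (whose Fibonacci number $2^{s-1}+1$ is already odd and typically has a rich prime factor structure), and the constants must be tracked precisely to recover the stated $(1+o(1))$ factor in the exponent.
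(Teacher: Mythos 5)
Your upper bound sketch points in the right direction: dichotomize on the matching number (the paper uses the vertex cover number $\tau$, related by $\nu\le\tau\le2\nu$), use $i(G)\le 3^{\mu(G)}2^{n-2\mu(G)}=(3/4)^{\mu(G)}2^n$, and observe that the exponent $0.2075$ equals $1-\tfrac12\log_2 3=\tfrac12\log_2(4/3)$. However, the plan is missing the crucial enumeration ingredient. After fixing a cutoff $r$, one must bound the \emph{number of unlabelled $n$-vertex graphs} with vertex cover number at most $r$. The paper does this via the Atmaca--Oru\c{c} P\'olya-theory count of unlabelled bipartite graphs (Lemma~3.2 and Corollary~3.4), which produces a sharp threshold $\mu_0\approx 0.1469$ solving $(1+2\mu_0)H(1/(1+2\mu_0))=1$, so that one may cut at $r=\lceil\log(\mu n)\rceil$ for any fixed $\mu<\mu_0$; the $n^{-0.2075}$ then comes from $(3/4)^{r/2}$. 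Your recursion on $|\phi(k)|$ and dyadic-strip balancing never supplies any concrete count of graphs with bounded matching number, and it is precisely that count which limits how large $r$ can be taken and hence where the exponent comes from; without an input of Atmaca--Oru\c{c} type (or equivalent) the argument does not close. The paper also does not iterate: the dichotomy on $\tau$ is applied once.

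The lower bound proposal has a gap that cannot be repaired within the framework you chose. Because you work with disjoint unions, $i(G_{\mathbf c})=2^{n-s\sum c_j}\prod_j a_j^{c_j}$, so distinctness forces a unique-factorization structure on the building-block Fibonacci numbers $a_j=i(H_j)$; odd primes (or pairwise coprime odd integers) is essentially the only option. You then need $r=\Omega(2^s/s)$ distinct odd primes arising as Fibonacci numbers of $s$-vertex graphs with no isolated vertex. Since $\pi(2^s)\sim 2^s/s$, this asks for a positive proportion of \emph{all} primes up to $2^s$ to occur as such Fibonacci numbers. That is a number-theoretic density statement with no plausible route to a proof: there is no known technique for showing a combinatorially defined integer sequence captures a positive density of primes, and the suggested perturbation-of-the-star heuristic produces no primality guarantee at all. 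The paper sidesteps the issue entirely by replacing multiplicative structure with additive structure: it uses partial joins of a large empty graph $G_L=\overline{K_{m^{d+1}}}$ with a disjoint union of cliques $G_R=(d+1)K_m$, together with the ``checkered extension'' device in a $d$-dimensional hypercube grid (Section~5), which by inclusion--exclusion makes every term in $i(G)$ a fixed constant except the full-transversal contribution, which can be toggled independently. This gives $\Sp(G_L,G_R)\supseteq\cB(J)+c$ for a long interval $J$ of binary digit positions, hence $2^{|J|}$ distinct values with no arithmetic hypotheses, and Section~6 (Proposition~6.1 together with Lemma~4.5) stitches several such blocks together so that $|J|$ is nearly $n$. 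A correct lower bound must manufacture its distinct values from combinatorial rather than number-theoretic freedom; your approach, however natural, cannot supply the primality input it requires.
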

Note that for every $\eps>0$,  ${2^{\sqrt{\log_2 n}}}{n^{-\eps}}\to 0$ as $n\to \infty$.

Let us outline the organization of the paper. In Section 2, we set some notation, and present some useful tools which will be applied later. In Section 3, we prove the upper bound of Theorem \ref{thm:main}. Then in Section 4 we continue by presenting general results on the so-called partial join operation and the derivation of $i(G)$ for a graph $G$ obtained via partial joining a pair of graphs $G_L$ and $G_R$, using information on $i(G')$ for induced subgraphs $G'\subseteq G_L$.
Section 5 is devoted to the presentation of the core of the main construction: the introduction of parts $G_L$, $G_R$ and a certain subfamily of their partial joins. This will show how to obtain graphs $G$ with $i(G)$ admitting values $M$, for which the binary digits of $M$ (up to an additive constant) can be arbitrarily prescribed in a certain long interval. Finally, in Section 6, we put together a construction where several copies of the previously given graph $G_R$ are combined, so that the aforementioned long interval can actually cover almost all of $[0,n]$, which in turn will imply the lower bound of Theorem \ref{thm:main}. In Section 7, we give an application of Theorem \ref{thm:main} to a general additive combinatorial problem about subsets of a given vector space that avoid certain intersection patterns with respect to subspaces \cite{codebased, KN25}. We mention that the general problem can be viewed as an extension of the cap-set problem \cite{EG}, where one wants to avoid a full (or empty) line in $\F_3^n$. Our application concerns binary vector spaces when one wants to avoid subspaces with intersection size $1$.

We conclude by raising some open problems and point out an exponential bound on the number of independence polynomials as a byproduct.

\section{Preliminaries, notation}

In this paper, a \textit{graph} always refers to a simple graph, that is, a graph without any loops or multiple edges. A graph $G=(V,E)$ has vertex set $V$ and edge set $E$. For a subset of the vertices $W\subseteq V$, the notation $G[W]$ refers to the subgraph of $G$ induced by the vertices in $W$.

A \textit{clique} is a complete graph, and the \textit{order} of the clique is its number of vertices. A clique of order $t$ is also referred to as a \textit{$t$-clique}, and is denoted $K_t$. For given disjoint vertex sets $V_1$ and $V_2$, $K[V_1,V_2]$ refers to the complete bipartite graph with classes $V_1$ and $V_2$ (or to its set of edges).

The complement of a graph $G$ is denoted by $\overline{G}$, therefore $\overline{K_t}$ denotes the empty graph on $t$ vertices.

The set of natural numbers is $\nn=\{0,1,2, \dots\}$, and the set of positive integers is $\zz^{+}=\nn\setminus \{0\}$. For a positive integer $n$, we use the notation $[n]=\{1, \dots, n\}$. For integers $m\le n$, we let $[m,n]=\{m, m+1, \dots, n\}$.

The disjoint union of sets $R_1$ and $R_2$ is denoted by $R_1\sqcup R_2$.

If $S,T\subseteq \rr$, then we use the sumset notation $S+T$ for $S+T=\{s+t: s\in S, t\in T\}$. As an abbreviation, a one-element set might be replaced by just its element, e.g., for $s\in \rr$ and $T\subseteq \rr$, $s+T=\{s+t: t\in T\}$. Similar notation is also used for subtraction or multiplication in place of addition. The sum of several sets might be abbreviated using the summation symbol, such as $\sum_{i=1}^{\ell} S_i := S_1+S_2+\ldots+S_{\ell}$.

If $B$ is an arbitrary statement, $\mathbbm{1}_B$ is the indicator function for $B$:
\begin{equation*}
\mathbbm{1}_B=
\begin{cases}
1 & \text{if $B$ is true,}\\
0 & \text{if $B$ is false.}\\
\end{cases}
\end{equation*}

In this paper, logarithms are taken base $2$. The \textit{binary entropy function} for a probability value $0\le p\le 1$, denoted $H(p)$, is defined as
$$H(p)=-p\log p-(1-p)\log (1-p).$$
We will use the following well-known bound on binomial coefficients:

\begin{lemma}\label{lem:entropy_bound}
$$\frac{1}{n+1}2^{nH\left(\frac{k}{n}\right)}\le \binom{n}{k}\le 2^{nH\left(\frac{k}{n}\right)}.$$
\end{lemma}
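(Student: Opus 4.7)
\medskip

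The plan is to deduce both bounds from a single identity, namely that when $p=k/n$, one has
$$2^{nH(k/n)}=p^{-k}(1-p)^{-(n-k)},$$
which is just the definition of $H$ rearranged with logarithms base $2$. I would first dispose of the boundary cases $k\in\{0,n\}$ (where $\binom{n}{k}=1$ and $H(k/n)=0$, so both inequalities reduce to $1/(n+1)\le 1\le 1$), and then assume $1\le k\le n-1$, so that $0<p<1$.

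For the upper bound, I would expand $1=(p+(1-p))^n$ via the binomial theorem and retain only the $i=k$ term:
$$1=\sum_{i=0}^{n}\binom{n}{i}p^i(1-p)^{n-i}\ge \binom{n}{k}p^k(1-p)^{n-k}.$$
Rearranging and using the identity above gives $\binom{n}{k}\le p^{-k}(1-p)^{-(n-k)}=2^{nH(k/n)}$. This is the short, standard argument and I do not expect any difficulty here.

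For the lower bound, I would use the same expansion but go in the opposite direction: bound the sum above by $(n+1)$ times its largest summand, and then show that the term indexed by $i=k$ is actually the maximum. The ratio of consecutive terms is
$$\frac{\binom{n}{i+1}p^{i+1}(1-p)^{n-i-1}}{\binom{n}{i}p^i(1-p)^{n-i}}=\frac{(n-i)p}{(i+1)(1-p)},$$
which, with $p=k/n$, is $\ge 1$ exactly when $i\le k-1+k/n$, i.e.\ when $i\le k-1$ in the integer range. Hence the sequence increases up to $i=k$ and decreases afterwards, so the maximum term is at $i=k$. This yields
$$1\le (n+1)\binom{n}{k}p^k(1-p)^{n-k},$$
and dividing by $p^k(1-p)^{n-k}$ and using the identity gives $\binom{n}{k}\ge 2^{nH(k/n)}/(n+1)$.

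The only subtlety is verifying the monotonicity claim carefully in the edge of the range (for instance, ensuring the argument behaves correctly when $k/n$ is not close to $1/2$), but the ratio computation above makes this immediate. No further tools beyond the binomial theorem are needed, and the lemma follows.
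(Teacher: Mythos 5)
Your proof is correct, and it is the standard textbook argument (the one found, for instance, in Cover--Thomas). Note that the paper itself does not prove this lemma at all --- it is quoted as a ``well-known bound'' and used without proof --- so there is no in-paper argument to compare against. Both halves of your reasoning check out: the identity $2^{nH(k/n)}=p^{-k}(1-p)^{-(n-k)}$ with $p=k/n$ is an immediate rewriting of the definition; the upper bound follows by keeping a single term of $\bigl(p+(1-p)\bigr)^n$; and for the lower bound your ratio computation correctly shows the sequence $a_i=\binom{n}{i}p^i(1-p)^{n-i}$ peaks at $i=k$ when $1\le k\le n-1$ (since $0<k/n<1$ makes the threshold $i\le k-1+k/n$ equivalent to $i\le k-1$ over the integers), so the $n+1$ summands are each at most $a_k$ and the bound follows. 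Your worry about the regime where $k/n$ is not close to $1/2$ is unfounded; the ratio argument is uniform in $p$ and needs no special case beyond the endpoints $k\in\{0,n\}$, which you dispatched at the start.
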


\noindent Given an integer $d\ge 1$, $\ff_2^d$ refers to the $d$-dimensional vector space over the binary field $\ff_2$.

\subsection{Graph operations}

Suppose we are given two graphs $G_1=(V_1,E_1)$ and $G_2=(V_2,E_2)$ on disjoint vertex sets. We define the following two standard operations on them.
\begin{itemize}
\itemsep0em
\item The \textit{disjoint union} $G_1\sqcup G_2$ is the graph $G=(V,E)$ with $V=V_1\sqcup V_2$ and $E=E_1\sqcup E_2$.
\item The \textit{full join} $G_1\triangledown G_2$ is the graph $G=(V,E)$, where $V=V_1\sqcup V_2$, and the edge set $E$ consists of $E_1\sqcup E_2$ together with all edges joining a vertex of $V_1$ to a vertex of $V_2$.
\end{itemize}

A \textit{partial join} of $G_1$ and $G_2$ is a graph obtained by adding a subset of the edges $\{v_1v_2: v_1\in V_1, v_2\in V_2\}$ to the disjoint union of $G_1$ and $G_2$, and we discuss it in more detail in Section \ref{sec:partial}.

Observe that $i(G_1\sqcup G_2)=i(G_1)i(G_2)$ and $i(G_1\triangledown G_2)=i(G_1)+i(G_2)-1$.

\subsection{Binary expansions of given support}

For $J\subseteq \nn$, let $\cB(J)$ denote the following subset of $\nn$:

$$\cB(J)=\left\{\sum_{j\in J'} 2^j: J'\subseteq J\right\}.$$

In other words, a natural number $\ell$ is in the set $\cB(J)$  if and only if 
the positions of the $1$'s in the binary representation of $\ell$ form a subset of $J$.
Note that binary digits are numbered from the right, starting with $0$.
For example, $\cB([0,n-1])=[0,2^n-1]$,  moreover, $\cB([k,\ell])=2^k[0,2^{\ell-k+1}-1]$ holds for $k\le\ell$.
We will use the 
 following simple observations without reference.
 \begin{proposition} \ - 
 \begin{itemize}
\itemsep0em
\item $|\cB(J)|=2^{|J|}$.
\item $\cB(J)=\sum_{j\in J}\{0, 2^j\}$.
\item If $J_1\cap J_2=\emptyset$ then $\cB(J_1)+\cB(J_2)=\cB(J_1\cup J_2)$.
\end{itemize}
 \end{proposition}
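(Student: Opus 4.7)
The plan is to treat the three bullet points in sequence, with the uniqueness of binary representations doing essentially all of the work.

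For the second bullet I would start from the definition: an element of $\sum_{j\in J}\{0,2^j\}$ is obtained by choosing, for each $j\in J$, either $0$ or $2^j$ and then summing. Collecting the indices $j$ where $2^j$ was chosen into a set $J'\subseteq J$ immediately exhibits the sum as $\sum_{j\in J'}2^j$, giving the inclusion $\sum_{j\in J}\{0,2^j\}\subseteq\cB(J)$; the reverse inclusion is identical read backwards. So the second identity is really just the distributive/set-theoretic unpacking of the definition of $\cB(J)$.

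For the first bullet I would note that $\cB(J)$ is the image of the map $J'\mapsto \sum_{j\in J'}2^j$ from $2^J$ to $\nn$. Uniqueness of binary expansions (applied to natural numbers whose nonzero binary digits all lie in $J$) says this map is injective, so its image has cardinality $2^{|J|}$. No computation beyond citing the uniqueness of binary representations is needed.

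For the third bullet I would combine the two previous facts. Using the second bullet on both $\cB(J_1)$ and $\cB(J_2)$ and disjointness $J_1\cap J_2=\emptyset$, I would write
\[
\cB(J_1)+\cB(J_2)=\sum_{j\in J_1}\{0,2^j\}+\sum_{j\in J_2}\{0,2^j\}=\sum_{j\in J_1\cup J_2}\{0,2^j\}=\cB(J_1\cup J_2),
\]
where the middle equality uses only that Minkowski sum of sets is associative and that $J_1,J_2$ are disjoint (so the two index ranges combine into a single sum over $J_1\cup J_2$ without repetition).

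There is no real obstacle here; the only thing worth being careful about is that, in the third bullet, the sumset equality relies on the elements of $\cB(J_1)+\cB(J_2)$ being obtainable in a \emph{unique} way as a sum of an element of $\cB(J_1)$ and an element of $\cB(J_2)$, which again is exactly the uniqueness of binary expansion applied to the partition $J_1\sqcup J_2$. Everything else is formal.
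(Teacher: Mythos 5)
Your proof is correct; the paper itself gives no proof of this proposition, stating only that these are ``simple observations'' to be used ``without reference,'' so your argument supplies exactly what the paper leaves implicit, and the reasoning (second bullet as a rewriting of the definition, first bullet via injectivity from uniqueness of binary expansion, third bullet by Minkowski-sum associativity and disjointness) is the natural one. One small correction to your closing remark: the set identity $\cB(J_1)+\cB(J_2)=\cB(J_1\cup J_2)$ does not actually rely on sums being \emph{uniquely} representable --- that equality of sets follows purely from the combinatorics you already gave (splitting any $J'\subseteq J_1\cup J_2$ as $(J'\cap J_1)\sqcup(J'\cap J_2)$, and merging $J_1'\sqcup J_2'$ in the other direction). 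Uniqueness of binary expansion is only needed for the cardinality claim in the first bullet; invoking it again in the third bullet is harmless but superfluous.
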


\subsection{Hypercube grids}\label{subsec:intro_hypercube}

For $d\in \zz^{+}$, let $I_j=[a_j,b_j]$ be an interval of integers for each $1\le j\le d$, where $a_j\le b_j$. Then the $d$-fold Cartesian product $Q:=I_1\times I_2\times \ldots\times I_d$ is called a \textit{$d$-dimensional grid}. Note that $Q$ is a subset of the integer lattice $\zz^d\subseteq \rr^d$. When $|I_1|=|I_2|=\ldots=|I_d|$, we call $Q$ a \textit{hypercube grid}, or just a \textit{hypercube} when the context is clear. The \textit{width} of the hypercube grid is $m:=|I_j|$. For given integers $j$, $k$ with $1\le j\le d$, we define $$H(j,k):=\{\mathbf{x}\in \rr^d: \mathbf{x}_j=k\},$$

\noindent which is an axis-parallel hyperplane of $\rr^d$. We will often take intersections of hypercubes with such a hyperplane: e.g. when $G$ is a hypercube and $k\in I_j$, we have $|Q\cap H(j,k)|=m^{d-1}$.

\subsection{Transversals}\label{subsec:intro_transversals}

Suppose the graph $G=(V,E)$ is a disjoint union of cliques, that is, $V=W_1\sqcup \ldots\sqcup W_{\ell}$ and for every $1\le j\le \ell$, $G[W_j]$ is isomorphic to the complete graph $K_{m_j}$ for some $m_j\in \zz^{+}$. In this setup, we will often refer to the following notions:

\begin{definition}
A \textit{full transversal} of $G$ is a set $T\subseteq V$ such that $|T\cap W_j|=1$ for every $1\le j\le \ell$.
\end{definition}
\begin{definition}
A \textit{partial transversal} of $G$ is a set $T\subseteq V$ such that $|T\cap W_j|\le 1$ for every $1\le j\le \ell$. If $\tau\subseteq [\ell]$ is the set of indices $j$ such that $|T\cap W_j|=1$, then $T$ is a \textit{partial transversal of support} $\tau$.
\end{definition}
\noindent Observe that a full transversal is a partial transversal of support $[\ell]$.

\section{Upper bound}

The main idea is the following. In order to give an upper bound on $\cNi(n)=|\{i(G): G\in \cG_n\}|$, we separate a family of non-isomorphic graphs which has cardinality less than a well-chosen function $T(n)$. Then we show that for each graph in the remaining family of graphs, the number of independent sets is also less than $T(n)$.

Recall that $\cG_n$ denotes the family of simple graphs on $n$ vertices. Given a graph $G$, let $\tau(G)$ and $\nu(G)$ denote the vertex cover number and the matching number of $G$, respectively. It is well-known that $\nu(G)\le \tau(G)\le 2\nu(G)$.

\begin{observation}\label{cut} For any function $r=r(n)$, we have $$|\{i(G): G\in \cG_n\}|\le   |\{G\in \cG_n: \tau(G)\le r\}| + |\{i(G): G\in \cG_n, \tau(G)> r\}|,$$

where in the first right-hand term, the number of graphs is counted up to isomorphism.
\end{observation}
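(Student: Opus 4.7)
The plan is a straightforward partition-and-bound argument. Split $\cG_n$ into two disjoint subfamilies according to the threshold $r=r(n)$, namely $\cG_n^{\le r}:=\{G\in\cG_n:\tau(G)\le r\}$ and $\cG_n^{> r}:=\{G\in\cG_n:\tau(G)>r\}$. Since every $n$-vertex graph lies in exactly one of these, the image set decomposes as
\[
\{i(G):G\in\cG_n\}\ =\ \{i(G):G\in\cG_n^{\le r}\}\ \cup\ \{i(G):G\in\cG_n^{> r}\},
\]
so by subadditivity of cardinality on unions it suffices to bound each piece separately.

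For the high-$\tau$ piece, there is nothing to do: we simply keep the cardinality of the value set, which is precisely the second summand on the right-hand side of the claim. For the low-$\tau$ piece, apply the trivial observation that any function from a finite set $\cF$ to $\nn$ has image of size at most $|\cF|$. Taking $\cF=\cG_n^{\le r}$ and the function $G\mapsto i(G)$, we obtain
\[
|\{i(G):G\in\cG_n^{\le r}\}|\ \le\ |\cG_n^{\le r}|\ =\ |\{G\in\cG_n:\tau(G)\le r\}|,
\]
which is the first summand. Adding the two bounds yields the inequality.

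There is no genuine obstacle in proving the observation itself; it is a packaging lemma whose purpose is to set up the two-sided attack carried out in Section 3. The real work, deferred to the subsequent subsections, will be to choose $r=r(n)$ so that both terms are simultaneously of the desired order $O(2^n n^{-0.2075})$: the first term is controlled by enumerating graphs admitting a small vertex cover, while the second term requires an upper bound on the number of distinct Fibonacci values achievable when $\tau(G)$ is large, typically via a structural compression argument exploiting the large matching guaranteed by $\nu(G)\ge \tau(G)/2>r/2$.
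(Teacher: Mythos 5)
Your argument is correct and is exactly the intended (and essentially unique) justification: partition $\cG_n$ by the threshold on $\tau$, bound the image of the small-$\tau$ part by the number of graphs, and keep the image cardinality on the large-$\tau$ part. The paper states this as an observation without proof, and your write-up supplies precisely the routine reasoning behind it.
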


We use the following lemma of Atmaca and Oruc \cite{Oruc}, based on Pólya's enumeration \cite{Polya}.

\begin{lemma}[\cite{Oruc}] Let $B_u(n-r,r)$ denote the set of unlabelled bipartite graphs on given bipartite classes of size $n-r$ and $r$. Then
   $$ \frac{{n-r+2^r-1\choose n-r}}{r!}\le |B_u(n-r,r)| \le 2\frac{{n-r+2^r-1\choose n-r}}{r!},$$ if $n-r > r.$
\end{lemma}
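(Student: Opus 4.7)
The plan is a standard orbit-counting argument. I would first fix a labelling on the smaller class $V_2$ of size $r$. An unlabelled-$V_1$, labelled-$V_2$ bipartite graph with $|V_1|=n-r$ is determined by the multiset of neighborhoods in $V_2$ of the vertices of $V_1$. Since each such neighborhood is one of the $2^r$ subsets of $V_2$, the number of such half-labelled graphs equals the number of size-$(n-r)$ multisets from a $2^r$-element set, which by stars and bars is
\[
|X| \;:=\; \binom{n-r+2^r-1}{n-r}.
\]

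To pass to the fully unlabelled count, I would quotient $X$ by the action of $S_r$ permuting the labels of $V_2$. Because $n-r>r$ the two classes have different sizes, so every isomorphism between bipartite graphs with these class sizes respects the bipartition; consequently $|B_u(n-r,r)|$ equals the number of $S_r$-orbits on $X$. Since each orbit has size at most $|S_r|=r!$, the orbit count is at least $|X|/r!$, which is exactly the claimed lower bound.

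For the upper bound, I would split the orbits into $N_1$ orbits of trivial stabilizer (each of size $r!$) and $N_2$ orbits with non-trivial stabilizer. From $|X|\ge N_1\cdot r!$ we get $N_1\le |X|/r!$, and every orbit counted by $N_2$ is represented by a multiset fixed by some non-identity $\sigma\in S_r$. A multiset is $\sigma$-fixed precisely when its multiplicity function on $2^{V_2}$ is constant along each $\sigma$-orbit; if the orbits of $\sigma$ on $2^{V_2}$ have sizes $c_1,\dots,c_k$, then $|\mathrm{Fix}(\sigma)|$ equals the number of nonnegative integer solutions of $\sum c_ix_i=n-r$. Since any non-identity permutation coalesces several subsets of $V_2$, we have $k<2^r$ (in fact $k\le 2^{r-1}+2^{\lceil r/2\rceil}$), giving a substantial saving. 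The target estimate is then
\[
\sum_{\sigma\ne\mathrm{id}}|\mathrm{Fix}(\sigma)| \;\le\; \frac{|X|}{r!},
\]
from which $|B_u(n-r,r)|=N_1+N_2\le 2|X|/r!$ follows at once.

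The main obstacle is the displayed inequality above. One has to book-keep the contribution of each conjugacy class of $S_r$, bound the orbit count $k(\sigma)$ on $2^{V_2}$ in terms of the cycle structure of $\sigma$, and show that the resulting binomials $\binom{n-r+k(\sigma)}{k(\sigma)}$, weighted by the sizes of the conjugacy classes, are dominated by $\binom{n-r+2^r-1}{n-r}/r!$. This is exactly where Polya's enumeration machinery is invoked and where the hypothesis $n-r>r$ is essential: it ensures that the main term grows fast enough in $n-r$ that the corrections coming from non-trivial stabilizers cannot catch up.
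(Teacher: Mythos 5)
The paper does not actually prove this lemma; it imports it from Atmaca--Oru\c{c} \cite{Oruc}, so there is no in-paper proof to compare against. Judged on its own, your set-up is correct and standard: the half-labelled count $|X|=\binom{n-r+2^r-1}{n-r}$ via multisets of neighbourhoods, the $S_r$-action (with $n-r>r$ guaranteeing that isomorphisms respect the bipartition), and the lower bound $|X|/r!$ from orbits having size at most $r!$ are all fine.

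The upper bound, however, has a genuine gap. You split the orbit count into $N_1+N_2$, bound $N_1\le |X|/r!$, and then try to close by bounding $N_2\le\sum_{\sigma\ne\mathrm{id}}|\mathrm{Fix}(\sigma)|$ and showing $\sum_{\sigma\ne\mathrm{id}}|\mathrm{Fix}(\sigma)|\le |X|/r!$. That last inequality is false in general. Take $r=3$, $n=7$, so $n-r=4>r$ and $|X|=\binom{11}{4}=330$, hence $|X|/r!=55$. A transposition has $6$ orbits on $2^{[3]}$ (four fixed points, two $2$-cycles), giving $\binom{}{}$-type count $58$ fixed multisets, and a $3$-cycle has $4$ orbits (two fixed points, two $3$-cycles), giving $9$; so $\sum_{\sigma\ne\mathrm{id}}|\mathrm{Fix}(\sigma)|=3\cdot 58+2\cdot 9=192>55$. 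The culprit is the step $N_2\le\sum_{\sigma\ne\mathrm{id}}|\mathrm{Fix}(\sigma)|$: an orbit whose stabiliser has order $m\ge 2$ contributes $r!\,(m-1)/m\ge r!/2$ to that sum, so you are overcounting each such orbit by about a factor of $r!$, which is exactly the factor by which your target estimate fails. The correct route is to invoke Burnside directly,
\[
|B_u(n-r,r)|=\frac{1}{r!}\sum_{\sigma\in S_r}|\mathrm{Fix}(\sigma)|=\frac{|X|}{r!}+\frac{1}{r!}\sum_{\sigma\ne\mathrm{id}}|\mathrm{Fix}(\sigma)|,
\]
so that the upper bound becomes equivalent to the much weaker (and true) inequality $\sum_{\sigma\ne\mathrm{id}}|\mathrm{Fix}(\sigma)|\le |X|$. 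In the example above this reads $192\le 330$. Your conjugacy-class bookkeeping --- bounding the orbit count $k(\sigma)$ of $\sigma$ on $2^{V_2}$ and hence $|\mathrm{Fix}(\sigma)|$, and using $n-r>r$ to make the main term dominate --- is the right ingredient, but it should be aimed at $|X|$, not at $|X|/r!$.
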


\begin{cor} \label{seged}
    If $r=\left\lceil \log \mu n\right\rceil$ for some fixed real $0<\mu<1$, then it holds for $n$ sufficiently large that
    \begin{equation*}
        \begin{split}
            |B_u(n-r,r)| &\le \frac{2}{\left\lceil\log \mu n\right\rceil!}\binom{n+2^{\left\lceil \log \mu n\right\rceil}-1}{n} \left(\frac{1}{1+\mu-\frac{1}{n}}\right)^{\log \mu n}\\
            &\le \frac{2(1+o(1))}{\sqrt{2\pi \log \mu n}}\left(\frac{e}{(1+\mu-\frac{1}{n})\log{\mu n} }\right)^{\log{\mu n}}\cdot 2^{n(1+2\mu)H\left(\frac{1}{1+2\mu}\right)}\\ 
            &\le
           (\mu n)^{
           c}\cdot 2^{n(1+2\mu)H\left(\frac{1}{1+2\mu}\right)} \text{\ \ for some absolute constant $c>0$,} 
        \end{split} 
    \end{equation*} 
   
   \noindent by using Stirling's formula and Lemma \ref{lem:entropy_bound}, after applying
\begin{equation*}
{n-r+2^r-1\choose n-r} \le {n+2^r-1\choose n}\left(\frac{1}{1+\mu-\frac{1}{n}}\right)^{\log \mu n}.\qedhere
\end{equation*}
    
 \begin{cor}\label{lem:kis_nu}
    Let $0<\mu<1$ be a real number. For $n$ sufficiently large and $r=\left\lceil \log\mu n\right\rceil$, we have
$$  |\{G\in \cG_n: \tau(G)\le r\}|\le 2^{\binom{r}{2}} |B_u(n-r,r)|\le (\mu n)^{0.5 \log{\mu n}+c}\cdot 2^{n(1+2\mu)H\left(\frac{1}{1+2\mu}\right)}$$ for some absolute constant $c>0$.
\end{cor}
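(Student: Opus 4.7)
The plan is to handle the two inequalities separately: the first is a structural counting argument, and the second is a direct computation.

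For the first inequality, I would start from the observation that every $G\in \cG_n$ with $\tau(G)\le r$ admits a vertex cover $S$ of size exactly $r$ (by padding an arbitrary minimum cover with non-cover vertices when $\tau(G)<r$). Since $V(G)\setminus S$ is then independent, an unlabelled such $G$ is determined by two pieces of data: the labelled graph $G[S]$ on $r$ vertices, of which there are at most $2^{\binom{r}{2}}$ choices, together with the bipartite graph of edges between $S$ and $V(G)\setminus S$, which up to isomorphism preserving the bipartition is an element of $B_u(n-r,r)$. To make this precise I would define a map from the Cartesian product $\{\text{labelled graphs on }[r]\}\times B_u(n-r,r)$ to $\{G\in\cG_n:\tau(G)\le r\}$ by gluing the two components along the size-$r$ side of the bipartite graph, and verify that it is surjective; the desired bound $2^{\binom{r}{2}}|B_u(n-r,r)|$ then follows.

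For the second inequality I would substitute the bound from Corollary \ref{seged} for $|B_u(n-r,r)|$ and combine it with the elementary estimate $2^{\binom{r}{2}}\le 2^{r^2/2}\le (\mu n)^{0.5\log\mu n+O(1)}$, which follows from $r\le \log\mu n+1$. Multiplying the two bounds and absorbing all absolute constants into a single constant $c$ then produces the stated right-hand side.

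The main (though mild) obstacle lies in the first step: in the unlabelled setting one must justify that the gluing map is genuinely surjective onto isomorphism classes of graphs with $\tau\le r$, so that the resulting inequality goes through even though many pairs can map to the same graph. Apart from this set-theoretic care, everything else is routine bookkeeping already packaged inside Corollary \ref{seged}.
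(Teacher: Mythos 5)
Your argument is essentially the same as the paper's: the paper also bounds the count by pairing a labelled graph on the $r$-vertex cover (at most $2^{\binom{r}{2}}$ choices) with an unlabelled bipartite graph between the cover and the independent remainder, then plugs in Corollary \ref{seged} together with $2^{\binom{r}{2}}\le(\mu n)^{0.5\log\mu n+O(1)}$. You merely spell out the gluing map and its surjectivity, which the paper treats as immediate.
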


\begin{proof}
The number of labelled simple graphs on $r$ vertices is $2^{\binom{r}{2}}$,  thus $2^{\binom{r}{2}} |B_u(n-r,r)|$ clearly provides an upper bound for the number of unlabelled graphs whose edges are all incident to a class of size at most $r$. Combining the upper bound $2^{\binom{r}{2}}\le(\mu n)^{0.5 \log{\mu n}+0.5}$ with Corollary \ref{seged}, we get the desired bound.
\end{proof}
\end{cor}

    To bound $|\{i(G): G\in \cG_n, \tau(G)> r\}|$, observe that any $G\in \cG_n$ with vertex cover number $\tau(G)>r$ has at least $\ell:=\left\lceil\frac{r+1}{2}\right\rceil$ independent edges. The two endpoints of any of these edges cannot both appear in an independent set of $G$, therefore $i(G)\le 3^{\ell}\cdot 2^{n-2\ell}$ for any such $G$. Hence we in turn have 
        
    \begin{lemma} \label{lem:nagy_nu}
       $$|\{i(G): G\in \cG_n, \tau(G)> r\}|\le 3^{\ell}\cdot 2^{n-2\ell}\le 2^n\left(\frac34\right)^{r/2}.$$
    \end{lemma}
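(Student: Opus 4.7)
The plan is to exploit the trivial but useful fact that the cardinality of a set of positive integers is bounded by its maximum element. Since $i(G)\in \zz^{+}$ for every graph $G$, we immediately obtain
$$|\{i(G): G\in \cG_n, \tau(G)>r\}|\le \max\{i(G): G\in \cG_n, \tau(G)>r\}.$$
So it suffices to prove $i(G)\le 3^{\ell}\cdot 2^{n-2\ell}$ uniformly over all $G\in\cG_n$ with $\tau(G)>r$.

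The first step is to translate the hypothesis on $\tau$ into information about the matching number. Invoking the well-known inequality $\tau(G)\le 2\nu(G)$, the assumption $\tau(G)>r$ yields $\nu(G)>r/2$, so $\nu(G)\ge \ell=\left\lceil \tfrac{1}{2}r+1\right\rceil$ (up to the appropriate integrality, as $\ell\ge r/2$ is all that will be needed below). In particular, $G$ contains a matching $M=\{e_1,\ldots,e_{\ell}\}$ of size $\ell$, on $2\ell$ vertices.

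The second step is the standard matching-based counting bound on independent sets. For any independent set $S\subseteq V(G)$ and any $e_i=\{u_i,v_i\}\in M$ we have $|S\cap e_i|\le 1$, so $S\cap e_i$ has exactly $3$ possibilities: $\emptyset$, $\{u_i\}$, or $\{v_i\}$. The remaining $n-2\ell$ vertices contribute at most $2$ possibilities each. Multiplying these independent choices gives $i(G)\le 3^{\ell}\cdot 2^{n-2\ell}$, and equality is attained by the graph consisting of a matching of size $\ell$ together with $n-2\ell$ isolated vertices, matching the ``extremal configuration'' alluded to in the discussion preceding the lemma.

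Finally, rewrite $3^{\ell}\cdot 2^{n-2\ell}=2^{n}(3/4)^{\ell}$ and use $\ell\ge r/2$ to deduce $2^{n}(3/4)^{\ell}\le 2^{n}(3/4)^{r/2}$. This last manipulation is the reason why the precise ceiling in the definition of $\ell$ is harmless. There is no significant obstacle: the argument is a short chain of standard graph-theoretic facts (positivity of $i(G)$, $\tau\le 2\nu$, the per-edge $3$-choice bound) combined with elementary arithmetic.
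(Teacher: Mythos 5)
Your proof is correct and follows the same approach the paper uses: the lemma is presented there as a direct consequence of the one-line remark preceding it, namely that the cardinality is at most the maximum value, and that the maximum over graphs with $\tau(G)>r$ is controlled via a large matching forced by $\tau\le 2\nu$. You have spelled out the standard per-edge $3$-choice bound for independent sets meeting a matching, and you correctly flag that the small integrality slippage in equating $\nu>r/2$ with $\nu\ge\ell=\lceil\tfrac12 r+1\rceil$ is harmless, since only $\ell\ge r/2$ enters the final inequality $2^n(3/4)^{\ell}\le 2^n(3/4)^{r/2}$.
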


\begin{proof}[Proof of Theorem \ref{thm:main}, upper bound]
    Let $\mu_0$ be the solution of the equality $(1+2\mu_0)H\left(\frac{1}{1+2\mu_0}\right)=1.$ This gives $\mu_0\approx 0.146908.$ Let us fix $\mu=0.1469<\mu_0$.  
    Then, by taking $r(n)=\left\lceil \log \mu n\right\rceil$ in Proposition \ref{cut}, for $n$ large enough we get  $$ |\{i(G): G\in \cG_n\}|\le 2^{0.99997n} + 2^n\cdot 2^{0.5\log(0.75)\log (0.1469n)}=O(2^n n^{-0.2075})$$ in view of Corollary \ref{lem:kis_nu} and Lemma  \ref{lem:nagy_nu}.
\end{proof}

\section{Partial joins and the summation trick}\label{sec:partial}

Our graph constructions will be reliant on the concept of partial joins.

\begin{definition}[Partial join of two graphs]
Let $G_L$ and $G_R$ be two graphs on disjoint vertex sets $V_L$ and $V_R$, respectively. A \textit{partial join} of $G_L$ and $G_R$ is any graph $G=(V,E)$ satisfying the following properties:
\begin{itemize}
\itemsep0em
\item $V=V_L\sqcup V_R$,
\item $G[V_L]=G_L$ and $G[V_R]=G_R$.
\end{itemize}
In this setup, $G_L$ and $G_R$ will often be called the \textit{left-hand} and \textit{right-hand classes}.
\end{definition}

In other words, a partial join is a graph $G$ on the combined vertex set $V=V_L\sqcup V_R$ such that all edges and non-edges are preserved within the classes $V_L$ and $V_R$, and for any pair $v_L\in V_L$ and $v_R\in V_R$, we can freely choose whether to include the edge $v_Lv_R$ in $G$ or not. If we include all such edges, the resulting graph $G$ is the full join $G_L\triangledown G_R$ of the two graphs. On the other hand, if we do not include any of these edges, we get the disjoint union $G_L\sqcup G_R$ of the two graphs.

In the main construction of this paper, $G_L$ will be a large empty graph, while $G_R$ will be a disjoint union of several cliques. Different values $i(G)$ can then be obtained by appropriately setting the edges between $V_L$ and $V_R$, providing the variable part of the construction.

The following trick allows us to count independent subsets in a partial join:

\begin{proposition}[Summation trick]\label{prop:summationtrick}
Let $G_L$ and $G_R$ be arbitrary graphs with disjoint vertex sets $V_L$ and $V_R$, and let $G$ be a partial join of $G_L$ and $G_R$. For every $v_R\in V_R$, denote by $N_L(v_R)$ the set of vertices in $V_L$ adjacent to $v_R$. Then
$$i(G)=\sum_{S_R\in I(G_R)} i\left(G_L\left[V_L\setminus \bigcup_{v_R\in S_R} N_L(v_R)\right]\right).$$
\end{proposition}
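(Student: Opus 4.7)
The plan is to prove the identity by partitioning the independent sets of $G$ according to their intersection with the right-hand class $V_R$, and counting each block separately.

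First, I would observe that every independent set $S\in I(G)$ decomposes uniquely as $S=S_L\sqcup S_R$, where $S_L:=S\cap V_L$ and $S_R:=S\cap V_R$. Since $G[V_R]=G_R$, the set $S_R$ must be independent in $G_R$, so $S_R\in I(G_R)$. Similarly, $S_L$ must be independent in $G_L$. It is therefore natural to write
\[
i(G)=\sum_{S_R\in I(G_R)} N(S_R),
\]
where $N(S_R)$ denotes the number of independent sets $S\in I(G)$ whose intersection with $V_R$ is exactly $S_R$.

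The key step is then to identify $N(S_R)$ with the quantity on the right-hand side of the proposition. Fix $S_R\in I(G_R)$. A subset $S_L\subseteq V_L$ completes $S_R$ to an independent set of $G$ if and only if (i) $S_L$ is independent in $G_L$, and (ii) no edge of $G$ joins $S_L$ to $S_R$. By the definition of $N_L(v_R)$, the second condition is equivalent to $S_L\cap N_L(v_R)=\emptyset$ for every $v_R\in S_R$, i.e.\ $S_L\subseteq V_L\setminus\bigcup_{v_R\in S_R} N_L(v_R)$. Writing $W(S_R):=V_L\setminus\bigcup_{v_R\in S_R} N_L(v_R)$, the two conditions together say precisely that $S_L$ is an independent set in the induced subgraph $G_L[W(S_R)]$. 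Hence $N(S_R)=i(G_L[W(S_R)])$, and substituting this into the display above yields the claimed formula.

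There is no real obstacle here: the argument is just a careful bookkeeping of which subsets of $V_L$ are compatible with a prescribed independent set on the $V_R$-side, using that in a partial join the only edges involving both classes are the prescribed cross-edges encoded by the neighbourhoods $N_L(v_R)$. The only point worth flagging explicitly is that the decomposition $S\mapsto(S_L,S_R)$ is a bijection between $I(G)$ and the set of pairs $(S_L,S_R)$ with $S_R\in I(G_R)$ and $S_L\in I(G_L[W(S_R)])$, which makes the summation disjoint and exhaustive.
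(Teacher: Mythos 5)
Your proof is correct and follows essentially the same approach as the paper: partition $I(G)$ by the trace $S_R = S\cap V_R$, observe that for fixed $S_R$ the valid completions $S_L$ are precisely the independent sets of $G_L$ restricted to $V_L\setminus\bigcup_{v_R\in S_R}N_L(v_R)$, and sum over $S_R\in I(G_R)$. The bookkeeping you flag at the end is exactly the implicit content of the paper's argument as well.
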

\begin{proof}
We count independent subsets $S$ of $G$ by considering each possible value of $S_R:=S\cap V_R$ as a separate case. Clearly, $S_R$ has to be an independent set in $G_R$, and if $S_R$ is given, then $S$ cannot contain any vertex from $V_L$ that is a neighbour of any vertex in $S_R$, therefore $S_L:=S\cap V_L$ has to be a subset of $V_L\setminus \bigcup\limits_{v_R\in S_R} N_L(v_R)$. As $S_L$ also has to be independent, we indeed get that there are $i(G_L[V_L\setminus\bigcup\limits_{v_R\in S_R} N_L(v_R)])$ independent subsets $S$ for this fixed $S_R$. Summing over all $S_R\in I(G_R)$ proves the result.
\end{proof}

\begin{example}\label{ex:oneclique}
A simple but illustrative case for the usage of Proposition \ref{prop:summationtrick} is the case when $V_R$ is a clique, say $V_R=\{v_1, \ldots, v_m\}$. In this case, we get
$$i(G)=i(G_L)+i(G_L[V_L\setminus N_1])+\ldots+i(G_L[V_L\setminus N_m]),$$

\noindent where the subsets $N_j:=N_L(v_j)$ for each $1\le j\le m$ are arbitrarily chosen subsets of $V_L$.
\end{example}

\begin{example}\label{ex:multiplecliques}
If $G_R$ is a disjoint union of cliques, this provides a more subtle way of summing Fibonacci numbers of induced subgraphs of $G_L$. Say $G_R$ is a disjoint union of $\ell$ cliques of sizes $m_1$, \ldots, $m_{\ell}$, where the $j$-th clique is denoted $W_j=\{w_{j,1}, \ldots, w_{j,m_j}\}$. Again, let $N_{j,k}:=N_L(w_{j,k})$ be arbitrary subsets of $V_L$ for each $1\le j\le \ell$ and $1\le k\le m_j$.

Independent subsets of $G_R$ are then precisely those sets $T_R\subseteq V_R$ that contain at most one element from each clique $W_j$, including the case $T_R=\emptyset$ as well. That is, $I(G_R)$ consists of precisely the partial transversals of $G_R$. Denoting by $\tau$ the support of such a partial transversal $T_R$, we get
\begin{align}
i(G) &= \sum_{T_R} \left(i\left(G_L\left[V_L\setminus \bigcup_{v_R\in T_R} N_L(v_R)\right]\right): \text{$T_R$ is a partial transversal of $G_R$}\right)\notag\\
~ &= \sum_{\tau\subseteq [\ell]}  \sum_{\substack{f:~\tau\to \zz^{+} \\ f(j)\in [m_j] ~\forall j}} i\left(G_L\left[V_L\setminus \bigcup_{t\in \tau} N_{t,f(t)}\right]\right).\label{eq:disjointtrick}
\end{align}

\noindent Figure \ref{fig:partialjoin} shows an example of this construction with $G_L=\overline{K_{15}}$ and $G_R=4K_3$.

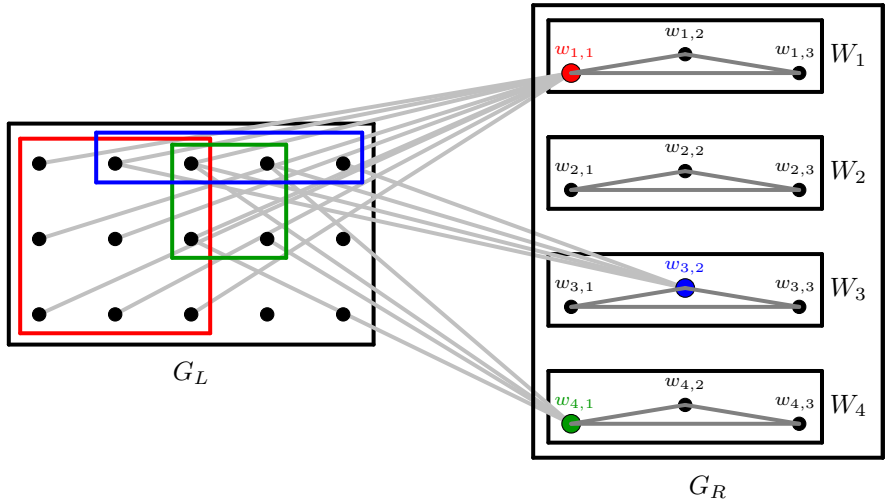
\begin{figure}[ht]
 \centering
\definecolor{cqcqcq}{rgb}{0.7529411764705882,0.7529411764705882,0.7529411764705882}
\definecolor{yqyqyq}{rgb}{0.5019607843137255,0.5019607843137255,0.5019607843137255}
\definecolor{qqqqff}{rgb}{0.,0.,1.}
\definecolor{qqzzqq}{rgb}{0.,0.6,0.}
\definecolor{ffqqqq}{rgb}{1.,0.,0.}
\begin{tikzpicture}[line cap=round,line join=round,>=triangle 45,x=1.0cm,y=1.0cm]
\clip(-0.7,-2.7) rectangle (11.5,4.5);
\draw [line width=1.5pt] (-0.4,2.53)-- (4.4,2.53);
\draw [line width=1.5pt] (4.4,2.53)-- (4.4,-0.4);
\draw [line width=1.5pt] (4.4,-0.4)-- (-0.4,-0.4);
\draw [line width=1.5pt] (-0.4,-0.4)-- (-0.4,2.53);
\draw (2,-0.8) node[anchor=mid] {$G_L$};
\draw [line width=1.5pt] (6.7,2.35)-- (6.7,1.4);
\draw [line width=1.5pt] (6.7,1.4)-- (10.3,1.4);
\draw [line width=1.5pt] (10.3,1.4)-- (10.3,2.35);
\draw [line width=1.5pt] (10.3,2.35)-- (6.7,2.35);
\draw [line width=1.5pt] (6.7,2.35)-- (10.3,2.35);
\draw [line width=1.5pt] (6.7,0.8)-- (6.7,-0.15);
\draw [line width=1.5pt] (6.7,-0.15)-- (10.3,-0.15);
\draw [line width=1.5pt] (10.3,-0.15)-- (10.3,0.8);
\draw [line width=1.5pt] (10.3,0.8)-- (6.7,0.8);
\draw [line width=1.5pt] (6.7,-0.75)-- (6.7,-1.7);
\draw [line width=1.5pt] (6.7,-1.7)-- (10.3,-1.7);
\draw [line width=1.5pt] (10.3,-1.7)-- (10.3,-0.75);
\draw [line width=1.5pt] (10.3,-0.75)-- (6.7,-0.75);
\draw [line width=1.5pt] (6.7,3.9)-- (6.7,2.95);
\draw [line width=1.5pt] (6.7,2.95)-- (10.3,2.95);
\draw [line width=1.5pt] (10.3,2.95)-- (10.3,3.9);
\draw [line width=1.5pt] (10.3,3.9)-- (6.7,3.9);
\draw [line width=1.5pt,color=cqcqcq] (7.,3.2)-- (0.,2.);
\draw [line width=1.5pt,color=cqcqcq] (1.,2.)-- (7.,3.2);
\draw [line width=1.5pt,color=cqcqcq] (7.,3.2)-- (2.,2.);
\draw [line width=1.5pt,color=cqcqcq] (0.,1.)-- (7.,3.2);
\draw [line width=1.5pt,color=cqcqcq] (7.,3.2)-- (1.,1.);
\draw [line width=1.5pt,color=cqcqcq] (2.,1.)-- (7.,3.2);
\draw [line width=1.5pt,color=cqcqcq] (7.,3.2)-- (2.,0.);
\draw [line width=1.5pt,color=cqcqcq] (1.,0.)-- (7.,3.2);
\draw [line width=1.5pt,color=cqcqcq] (7.,3.2)-- (0.,0.);
\draw [line width=1.5pt,color=cqcqcq] (8.5,0.35)-- (1.,2.);
\draw [line width=1.5pt,color=cqcqcq] (8.5,0.35)-- (2.,2.);
\draw [line width=1.5pt,color=cqcqcq] (8.5,0.35)-- (3.,2.);
\draw [line width=1.5pt,color=cqcqcq] (8.5,0.35)-- (4.,2.);
\draw [line width=1.5pt,color=cqcqcq] (7.,-1.45)-- (2.,1.);
\draw [line width=1.5pt,color=cqcqcq] (7.,-1.45)-- (3.,1.);
\draw [line width=1.5pt,color=cqcqcq] (7.,-1.45)-- (2.,2.);
\draw [line width=1.5pt,color=cqcqcq] (7.,-1.45)-- (3.,2.);
\draw [line width=1.5pt,color=ffqqqq] (-0.25,2.33)-- (2.25,2.33);
\draw [line width=1.5pt,color=ffqqqq] (2.25,2.33)-- (2.25,-0.25);
\draw [line width=1.5pt,color=ffqqqq] (2.25,-0.25)-- (-0.25,-0.25);
\draw [line width=1.5pt,color=ffqqqq] (-0.25,-0.25)-- (-0.25,2.33);
\draw [line width=1.5pt,color=qqzzqq] (1.75,2.25)-- (3.25,2.25);
\draw [line width=1.5pt,color=qqzzqq] (3.25,2.25)-- (3.25,0.75);
\draw [line width=1.5pt,color=qqzzqq] (3.25,0.75)-- (1.75,0.75);
\draw [line width=1.5pt,color=qqzzqq] (1.75,0.75)-- (1.75,2.25);
\draw [line width=1.5pt,color=qqqqff] (0.75,2.41)-- (4.25,2.41);
\draw [line width=1.5pt,color=qqqqff] (4.25,2.41)-- (4.25,1.75);
\draw [line width=1.5pt,color=qqqqff] (4.25,1.75)-- (0.75,1.75);
\draw [line width=1.5pt,color=qqqqff] (0.75,1.75)-- (0.75,2.41);
\draw [line width=1.5pt] (6.5,4.1)-- (11.1,4.1);
\draw [line width=1.5pt] (11.1,4.1)-- (11.1,-1.9);
\draw [line width=1.5pt] (11.1,-1.9)-- (6.5,-1.9);
\draw [line width=1.5pt] (6.5,-1.9)-- (6.5,4.1);
\draw [line width=1.5pt] (11.1,4.1)-- (11.1,-1.9);
\draw (10.65,3.425) node[anchor=mid] {$W_1$};
\draw (10.65,1.875) node[anchor=mid] {$W_2$};
\draw (10.65,0.325) node[anchor=mid] {$W_3$};
\draw (10.65,-1.225) node[anchor=mid] {$W_4$};
\draw (8.8,-2.3) node[anchor=mid] {$G_R$};
\begin{scriptsize}
\draw [fill=black] (0.,0.) circle (2.5pt);
\draw [fill=black] (1.,0.) circle (2.5pt);
\draw [fill=black] (2.,0.) circle (2.5pt);
\draw [fill=black] (3.,0.) circle (2.5pt);
\draw [fill=black] (4.,0.) circle (2.5pt);
\draw [fill=black] (0.,1.) circle (2.5pt);
\draw [fill=black] (1.,1.) circle (2.5pt);
\draw [fill=black] (2.,1.) circle (2.5pt);
\draw [fill=black] (3.,1.) circle (2.5pt);
\draw [fill=black] (4.,1.) circle (2.5pt);
\draw [fill=black] (0.,2.) circle (2.5pt);
\draw [fill=black] (1.,2.) circle (2.5pt);
\draw [fill=black] (2.,2.) circle (2.5pt);
\draw [fill=black] (3.,2.) circle (2.5pt);
\draw [fill=black] (4.,2.) circle (2.5pt);
\draw [fill=black] (7.,1.65) circle (2.5pt);
\draw[color=black] (7.05,1.95) node[anchor=mid] {$w_{2,1}$};
\draw [fill=black] (8.5,1.9) circle (2.5pt);
\draw[color=black] (8.5,2.2) node[anchor=mid] {$w_{2,2}$};
\draw [fill=black] (10.,1.65) circle (2.5pt);
\draw[color=black] (9.95,1.95) node[anchor=mid] {$w_{2,3}$};
\draw [fill=black] (7.,0.1) circle (2.5pt);
\draw[color=black] (7.05,0.4) node[anchor=mid] {$w_{3,1}$};
\draw [fill=qqqqff] (8.5,0.35) circle (3.5pt);
\draw[color=qqqqff] (8.5,0.65) node[anchor=mid] {$w_{3,2}$};
\draw [fill=black] (10.,0.1) circle (2.5pt);
\draw[color=black] (9.95,0.4) node[anchor=mid] {$w_{3,3}$};
\draw [fill=qqzzqq] (7.,-1.45) circle (3.5pt);
\draw[color=qqzzqq] (7.05,-1.15) node[anchor=mid] {$w_{4,1}$};
\draw [fill=black] (8.5,-1.2) circle (2.5pt);
\draw[color=black] (8.5,-0.9) node[anchor=mid] {$w_{4,2}$};
\draw [fill=black] (10.,-1.45) circle (2.5pt);
\draw[color=black] (9.95,-1.15) node[anchor=mid] {$w_{4,3}$};
\draw [fill=ffqqqq] (7.,3.2) circle (3.5pt);
\draw[color=ffqqqq] (7.05,3.5) node[anchor=mid] {$w_{1,1}$};
\draw [fill=black] (8.5,3.45) circle (2.5pt);
\draw[color=black] (8.5,3.75) node[anchor=mid] {$w_{1,2}$};
\draw [fill=black] (10.,3.2) circle (2.5pt);
\draw[color=black] (9.95,3.5) node[anchor=mid] {$w_{1,3}$};
\end{scriptsize}
\draw [line width=1.5pt,color=yqyqyq] (7.,1.65)-- (8.5,1.9);
\draw [line width=1.5pt,color=yqyqyq] (8.5,1.9)-- (10.,1.65);
\draw [line width=1.5pt,color=yqyqyq] (10.,1.65)-- (7.,1.65);
\draw [line width=1.5pt,color=yqyqyq] (7.,3.2)-- (8.5,3.45);
\draw [line width=1.5pt,color=yqyqyq] (8.5,3.45)-- (10.,3.2);
\draw [line width=1.5pt,color=yqyqyq] (10.,3.2)-- (7.,3.2);
\draw [line width=1.5pt,color=yqyqyq] (7.,0.1)-- (8.5,0.35);
\draw [line width=1.5pt,color=yqyqyq] (8.5,0.35)-- (10.,0.1);
\draw [line width=1.5pt,color=yqyqyq] (10.,0.1)-- (7.,0.1);
\draw [line width=1.5pt,color=yqyqyq] (7.,-1.45)-- (8.5,-1.2);
\draw [line width=1.5pt,color=yqyqyq] (8.5,-1.2)-- (10.,-1.45);
\draw [line width=1.5pt,color=yqyqyq] (10.,-1.45)-- (7.,-1.45);
\end{tikzpicture}
\caption{An example for a partial join of $G_L=\overline{K_{15}}$ and $G_R=4K_3$. For the red, blue and green vertices of $G_R$, their neighbour sets in $G_L$ are denoted with identically coloured boxes, whereas the rest of the vertices of $G_R$ have no neighbours in $G_L$. The partial transversal $T_R=\{w_{1,1}, w_{3,2}, w_{4,1}\}$ consisting of the three coloured vertices has $|V_L\setminus \cup_{v_R\in T_R} N_L(v_R)|=3$, therefore it contributes $i(\overline{K_3})=8$ to the total sum $i(G)$ shown in \eqref{eq:disjointtrick}. In contrast, the full transversal $T'_R=\{w_{1,1}, w_{2,1}, w_{3,1}, w_{4,1}\}$ would contribute $2^4=16$.}
\label{fig:partialjoin}
\end{figure}
\end{example}

\subsection{Partial join spectra}

\begin{definition}
Let $G_L$ and $G_R$ be two graphs with disjoint vertex sets $V_L$ and $V_R$. Then the \textit{partial join spectrum} of $G_L$ and $G_R$ is defined as
$$\Sp(G_L,G_R):=\{i(G): G\text{ is a partial join of }G_L\text{ and }G_R\}.$$
\end{definition}

The following identities will be useful when the left-hand class is augmented with additional isolated vertices.

\begin{lemma}\label{lem:spectrumscaling}
Let $G_L$ and $G_R$ be two graphs on disjoint vertex sets, and $t\in \nn$. Then
\begin{enumerate}[label=(\alph*)]
\item $\Sp(G_L\sqcup \overline{K_t}, G_R)\supseteq 2^t\Sp(G_L, G_R)$,
\item $\Sp(G_L\sqcup \overline{K_t}, G_R) \supseteq \Sp(G_L, G_R) + (2^t-1)i(G_L)$.
\end{enumerate}
\end{lemma}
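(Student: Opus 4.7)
The plan is to prove both inclusions by exhibiting, for each given $M\in \Sp(G_L,G_R)$, an explicit partial join $G'$ of $G_L\sqcup \overline{K_t}$ and $G_R$ with $i(G')$ equal to the desired value. In both cases, we start from a partial join $G$ of $G_L$ and $G_R$ realizing $i(G)=M$ and we only have to decide how each of the $t$ new isolated vertices (call their set $U$, with $|U|=t$) attaches to $V_R$.

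For part (a), I would attach no edges between $U$ and $V_R$, so that $G'[V_L\cup U]=G_L\sqcup \overline{K_t}$ and the neighbourhood in $V_L\cup U$ of any vertex $v_R\in V_R$ is still $N_L(v_R)\subseteq V_L$. Applying the summation trick (\autoref{prop:summationtrick}) to $G'$, each term is
$$i\bigl((G_L\sqcup \overline{K_t})[(V_L\cup U)\setminus \textstyle\bigcup_{v_R\in S_R}N_L(v_R)]\bigr)=2^t\cdot i\bigl(G_L[V_L\setminus \textstyle\bigcup_{v_R\in S_R}N_L(v_R)]\bigr),$$
since the $t$ vertices of $U$ survive untouched and contribute a factor of $i(\overline{K_t})=2^t$. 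Summing over $S_R\in I(G_R)$ yields $i(G')=2^t\cdot i(G)=2^t M$, as required.

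For part (b), I would do the opposite extreme: attach every vertex of $U$ to every vertex of $V_R$. Now for any nonempty $S_R\in I(G_R)$, no vertex of $U$ can appear in an independent extension, so the corresponding term in the summation trick remains $i(G_L[V_L\setminus \bigcup_{v_R\in S_R}N_L(v_R)])$, exactly as it was in $G$. For $S_R=\emptyset$, however, all of $U$ is available and the term becomes $2^t\cdot i(G_L)$ instead of $i(G_L)$. Splitting off the $S_R=\emptyset$ term in $i(G)=\sum_{S_R}i(G_L[\ldots])$, we get
$$i(G')=2^t\,i(G_L)+\bigl(i(G)-i(G_L)\bigr)=M+(2^t-1)\,i(G_L).$$

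There is no real obstacle here: both constructions are canonical choices of the attachment of $U$ to $V_R$ (none and all, respectively), and the computation is a direct application of \autoref{prop:summationtrick} together with the fact that $\overline{K_t}$ contributes a multiplicative factor of $2^t$ in the independence count when its vertices are unconstrained. The only bookkeeping to watch is that $G'$ is a legitimate partial join of $G_L\sqcup \overline{K_t}$ and $G_R$ in each case, i.e., that the induced subgraphs on $V_L\cup U$ and on $V_R$ come out right, which is immediate from the construction.
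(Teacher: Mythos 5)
Your proof is correct and takes essentially the same approach as the paper: add $t$ isolated vertices to the left class, joined to none of $V_R$ for part (a) and to all of $V_R$ for part (b). The only cosmetic difference is that for (a) the paper simply observes $G'=G\sqcup\overline{K_t}$ and uses $i(G_1\sqcup G_2)=i(G_1)i(G_2)$ rather than re-running the summation trick, but the content is identical.
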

\begin{proof}
\begin{enumerate}[label=(\alph*)]
\item Let $G$ be any partial join of $G_L$ and $G_R$, and let $G'$ be the partial join of $G_L\sqcup \overline{K_t}$ and $G_R$ obtained from $G$ by adding $t$ isolated vertices to the left-hand class that are not joined to any right-hand vertices either. Then $i(G')=i(G\sqcup \overline{K_t})=2^t\cdot i(G)$.
\item Let $G$ be any partial join of $G_L$ and $G_R$, and let $G'$ be the partial join of $G_L\sqcup \overline{K_t}$ and $G_R$ obtained from $G$ by adding $t$ isolated vertices to the left-hand class and joining all of them to all right-hand vertices. Then the independent sets of $G'$ are just the independent sets of $G$, with the added case that when we do not choose any right-hand vertices, we can pick any independent set of $G_L$ together with a nonempty subset of $\overline{K_t}$, giving $(2^t-1)i(G_L)$ further options.
\qedhere
\end{enumerate}
\end{proof}

The following lemma combines several partial joins into one, and provides the framework used for putting together our full construction.

\begin{lemma}\label{lem:join_with_multiple}
For $\ell\ge 1$, suppose that $G_L$, $G^{(1)}_R$, $G^{(2)}_R$, \dots, $G^{(\ell)}_R$ are vertex-disjoint graphs. Let 
$$G_R=G^{(1)}_R\triangledown G^{(2)}_R \triangledown \dots\triangledown G^{(\ell)}_R$$ be the full join of the $\ell$ graphs $G^{(1)}_R$, $G^{(2)}_R$, \dots, $G^{(\ell)}_R$. Then
$$\Sp(G_L,G_R)=\Sp(G_L,G^{(1)}_R) + \ldots + \Sp(G_L,G^{(\ell)}_R) - (\ell-1)\cdot i(G_L).$$
\end{lemma}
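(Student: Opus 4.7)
The plan is to apply Proposition \ref{prop:summationtrick} and exploit the fact that in a full join, independent sets cannot straddle two of the joined parts. More precisely, since every vertex of $V^{(i)}_R$ is adjacent to every vertex of $V^{(j)}_R$ for $i\neq j$ in $G_R$, any non-empty independent set of $G_R$ is fully contained in $V^{(j)}_R$ for exactly one index $j$, while $\emptyset$ is an independent set of every $G^{(j)}_R$. Thus $I(G_R) = \bigcup_{j=1}^\ell I(G^{(j)}_R)$, and the only overlap between the sets $I(G^{(j)}_R)$ is the empty set, which is shared by all of them.

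Given any partial join $G$ of $G_L$ and $G_R$, let $G^{(j)}$ denote the partial join of $G_L$ and $G^{(j)}_R$ inherited from $G$ (keep the same edges between $V_L$ and $V^{(j)}_R$). Applying Proposition \ref{prop:summationtrick} to $G$ and splitting the sum according to which class $V^{(j)}_R$ contains $S_R$, I would write
\begin{equation*}
i(G) \;=\; \sum_{j=1}^{\ell}\;\sum_{S_R\in I(G^{(j)}_R)} i\!\left(G_L\!\left[V_L\setminus\bigcup_{v_R\in S_R}N_L(v_R)\right]\right) - (\ell-1)\,i(G_L),
\end{equation*}
where the correction term $(\ell-1)\,i(G_L)$ compensates for $S_R=\emptyset$ having been counted $\ell$ times instead of once (and it contributes exactly $i(G_L)$ each time). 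Applying the summation trick in reverse to each inner double sum yields $i(G) = \sum_{j=1}^{\ell} i(G^{(j)}) - (\ell-1)\,i(G_L)$. This establishes the inclusion $\Sp(G_L,G_R)\subseteq \sum_{j=1}^{\ell}\Sp(G_L,G^{(j)}_R) - (\ell-1)\,i(G_L)$.

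For the reverse inclusion, given any choice of values $s_j\in \Sp(G_L,G^{(j)}_R)$ for $j=1,\ldots,\ell$, realised by partial joins $G^{(j)}$ of $G_L$ and $G^{(j)}_R$, I would form the single partial join $G$ of $G_L$ and $G_R$ whose $V_L$-to-$V^{(j)}_R$ edges coincide with those of $G^{(j)}$ for every $j$. The identity above then gives $i(G) = \sum_j s_j - (\ell-1)i(G_L)$, which shows this value lies in $\Sp(G_L,G_R)$, yielding the equality claimed.

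The argument is conceptually short once one notes the full-join structure forces disjoint-support independent sets; the only subtle point is the off-by-$(\ell{-}1)$ correction for the empty set, which I expect to be the main thing to state carefully. No delicate estimates are needed, so the main obstacle is really just bookkeeping of the empty-set overlap and verifying that the two partial-join realisations (forward and reverse) genuinely correspond to each other via the same bipartite edge set.
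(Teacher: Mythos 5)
Your proof is correct and takes essentially the same approach as the paper: both apply the summation trick (Proposition \ref{prop:summationtrick}), observe that the nonempty independent sets of a full join partition among the joined parts while the empty set is shared by all, and subtract the $(\ell-1)i(G_L)$ over-count. Your more explicit handling of the two inclusions is a slight presentational difference, but the underlying argument is identical.
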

\begin{proof}
Let $V_L$, $V_R$, and $V^{(j)}_R$ (for $1\le j\le \ell$) denote the vertex sets of graphs $G_L$, $G_R$ and $G^{(j)}_R$, respectively.

Consider any partial join $G$ of $G_L$ and $G_R$, and for each $1\le j\le \ell$, let $G^{(j)}$ be the subgraph of $G$ induced by $V_L\sqcup V^{(j)}_R$. Then $G^{(j)}$ is an arbitrary partial join of $G_L$ and $G^{(j)}_R$.

For an independent subset $S$ of $G$, either $S\cap V_R=\emptyset$ or not. In the former case, $S$ is an arbitrary independent subset of $G_L$. In the latter case, since $G_R$ is a full join, there is exactly one $j$ such that $S\cap V_R^{(j)}\ne \emptyset$, and $S$ is an arbitrary independent subset of $G^{(j)}$ not fully contained in $V_L$. Hence the overall number of possibilities for $S$ is
$$i(G)=i(G_L) + \sum_{j=1}^{\ell} \left(i(G^{(j)}) - i(G_L)\right)=\sum_{j=1}^{\ell} i(G^{(j)}) - (\ell-1)\cdot i(G_L).$$

Since the partial joins $G^{(j)}$ are arbitrary, we get the desired result for the partial join spectra.
\end{proof}

\section{Construction: main part}\label{sec:constr_main}
The goal of this section is to prove the following proposition, which will provide the main part of our construction. We give two graphs such that in their set of partial joins $G$, the value $i(G)-c$ (for some constant $c$) has a long interval of binary digits that can be arbitrarily set by varying the edges between the two classes.

\begin{proposition}\label{prop:long_interval}
Fix an absolute constant $\eps>0$, and take $d,m\in \zz^{+}$ such that $d\ge d_0(\eps)$ is large enough and $m>2^d$. Then for the two graphs $G_L=\overline{K_{m^{d+1}}}$ and $G_R=(d+1)K_m$, we have
$$\Sp(G_L,G_R)\supseteq \cB\left(\left[\left\lceil(1+\eps)2^{d-1}m^d\right\rceil,~ \left\lfloor m^{d+1}-(1+\eps)2^{d-1}m^d\right\rfloor\right]\right)+c,$$

where $c\in \nn$ is a constant depending on $d,m,\eps$.
\end{proposition}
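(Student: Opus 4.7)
The plan is to build, for each subset $J \subseteq I$, an explicit partial join $G_J$ of $G_L = \overline{K_{m^{d+1}}}$ and $G_R = (d+1)K_m$ whose independent-set count satisfies $i(G_J) = c + \sum_{k \in J} 2^k$ for a single additive constant $c = c(d,m,\eps)$. I would identify $V_L$ with the grid $[m]^{d+1}$ and label the vertices of $V_R$ as $w_{j,k}$ with $j \in [d+1]$, $k \in [m]$; the natural hyperplanes $H(j,k) = \{p \in V_L : p_j = k\}$, each of size $m^d$, serve as the geometric template for the neighborhoods $N(w_{j,k})$.

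By Proposition~\ref{prop:summationtrick}, in the concrete form of~\eqref{eq:disjointtrick}, one has
$$i(G) = \sum_{\tau \subseteq [d+1]} \sum_{f \colon \tau \to [m]} 2^{|C(\tau,f)|}, \qquad C(\tau,f) = V_L \setminus \bigcup_{t \in \tau} N(w_{t,f(t)}).$$
With the base choice $N(w_{j,k}) = H(j,k)$, inclusion-exclusion gives $|C(\tau,f)| = (m-1)^{|\tau|} m^{d+1-|\tau|}$. Expanding as $m^{d+1} - |\tau| m^{d} + \binom{|\tau|}{2}m^{d-1} - \dots$, one sees that $|C(\tau,f)| > B$ precisely when $|\tau|$ is below roughly $(1+\eps)2^{d-1}$, and is pushed below $A$ when $|\tau|$ is sufficiently close to $d+1$. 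This is exactly why the interval $I = [\lceil(1+\eps)2^{d-1}m^d\rceil,~\lfloor m^{d+1}-(1+\eps)2^{d-1}m^d\rfloor]$ appears: the slack factor $(1+\eps)$ is there to absorb the lower-order inclusion-exclusion corrections uniformly in the modified construction.

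To encode $J$, I would augment the base neighborhoods to $N(w_{j,k}) = H(j,k) \cup E^J_{j,k}$ where $E^J_{j,k} \subseteq V_L \setminus H(j,k)$ is a $J$-dependent subset. For each target bit $k \in I$, I would design a dedicated "toggle": a small collection of grid points whose inclusion in certain $E^J_{j,k}$'s is controlled by the indicator $\mathbbm{1}[k\in J]$, and which affects the size of $C(\tau,f)$ for essentially one partial transversal, changing its contribution by exactly $2^k$. The hypothesis $m > 2^d$ provides enough room in the grid to place the toggle points on distinct locations so that different bits operate on disjoint $(\tau,f)$ pairs, and the hypothesis $d \geq d_0(\eps)$ ensures the inclusion-exclusion tails remain in the safe regions outside $I$.

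The main obstacle I anticipate is the simultaneous verification of three things: (a) the heavy contributions with $|C(\tau,f)| > B$ do not depend on $J$ and hence contribute a fixed piece of $c$; (b) the light contributions with $|C(\tau,f)| < A$ similarly coalesce into a $J$-independent constant; and (c) the toggles are genuinely independent, so that superimposing them produces the clean binary sum $\sum_{k\in J}2^k$ without stray carries or side effects. The first two points require a careful bookkeeping argument that tracks how each augmentation $E^J_{j,k}$ distorts the $|C(\tau,f)|$ from its base value, while the third is a combinatorial statement about placement of the toggle vertices in the hypercube grid. Concretely, I would most likely argue by partitioning the partial transversals according to $|\tau|$ and showing that in each stratum the dependence on $J$ either vanishes or picks out exactly one prescribed bit, the delicate boundary between the strata being exactly what forces the threshold $(1+\eps)2^{d-1}m^d$.
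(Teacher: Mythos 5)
Your high-level architecture matches the paper's: identify $V_L$ with a grid, use hyperplane slices $H(j,k)$ as neighbourhood templates, expand $i(G)$ via the summation trick over partial transversals, and try to create a ``toggle'' for each binary digit in the target interval. But the core of the construction — how the toggles actually work without wrecking everything else — is left unresolved, and this is where the real difficulty lies. Two specific gaps:

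First, with the base choice $N(w_{j,k})=H(j,k)$ in $[m]^{d+1}$, the exponent $|C(\tau,f)|=(m-1)^{|\tau|}m^{d+1-|\tau|}$ depends only on $|\tau|$, so it takes just $d+2$ distinct values, while you need the full-transversal exponents to sweep an interval of length about $m^{d+1}$. You never explain how the augmentations $E^{J}_{j,k}$ are supposed to spread roughly $m^{d+1}$ identical exponents into $m^{d+1}$ consecutive ones while also encoding $J$. The paper handles this by using a $d$-dimensional extended hypercube $[2m]^d$ together with ``additional sets'' $A_j$ of the form $\{a_{j,1},\dots,a_{j,n_j}\}$, and letting $N_L(w_{j,k})$ include a prefix $\{a_{j,\ell}:\ell\le (k-1)m^{j-1}\}$; the term $\sigma_{k_1,\dots,k_d}=\sum_j (k_j-1)m^{j-1}$ then runs over $[0,m^d-1]$ and, combined with an arithmetic progression of offsets $t_k$, makes the exponents cover an interval. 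Without some analogue of this mechanism, the toggles have nowhere distinct to live and you get uncontrolled carries.

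Second, and more fundamentally, you need each toggle to perturb exactly one term of the inclusion--exclusion expansion and leave every lower-order intersection untouched, otherwise the ``light'' and ``heavy'' contributions cease to be $J$-independent and constant. You acknowledge this is the crux, but give no mechanism. The paper's key technical idea is the \emph{checkered extension}: given $S\subseteq Q=[m]^d$, define $S_c\subseteq\widehat{Q}=[2m]^d$ by including the copy $S+m\mathbf{v}$ for even $\mathbf{v}\in\{0,1\}^d$ and the complementary copy $(Q\setminus S)+m\mathbf{v}$ for odd $\mathbf{v}$. Lemma~\ref{lem:checkered_property} then shows $|S_c\cap H(j_1,k_1)\cap\dots\cap H(j_\ell,k_\ell)|=\tfrac12(2m)^{d-\ell}$ for every $\ell\le d-1$, \emph{regardless of $S$}; the $S$-dependence surfaces only at $\ell=d$, where the intersection is a single point of $Q$ and has size $\mathbbm{1}_{(k_1,\dots,k_d)\in S}$. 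This is exactly the property that collapses all lower-order terms to constants and localises the $J$-dependence to one indicator per full transversal. Nothing in your proposal plays this role, and without it the claim that the augmentations ``affect the size of $C(\tau,f)$ for essentially one partial transversal'' does not hold: adding points to a neighbourhood generically changes $|N_L(w)\cap N_L(w')|$ for all smaller $T'$, producing $J$-dependent corrections at every level of inclusion--exclusion.
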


In this section, we consider the parameters $\eps>0$, $d\ge d_0$ and $m>2^d$ to be fixed, where $d_0\ge 3$ is to be determined later. We will consider partial joins $G$ of $G_L=\overline{K_{m^{d+1}}}$ and $G_R=(d+1)K_m$, where some adjacencies between the two classes will be fixed and some will be variable, providing various possibilities for $i(G)$.

The left-hand class is a large empty graph, which we will present as a disjoint union of various sets. We will give a labelling of the vertices in these sets, which will be crucial to describing the adjacencies between the two classes.

The main mechanism of the labelling of $G_L$ will be the so-called \textit{extended hypercube}, whose properties will be presented in Subsection \ref{subsec:extended_hypercube}. Then in Subsection \ref{subsec:vertex_labellings}, we present the vertices and edges of $G$. In Subsections \ref{subsec:computation} and \ref{subsec:fixingtkchoices}, we compute $i(G)$, focusing on its variable term. It turns out that this term can be set to any value in $\cB(J)$ for a long interval $J$, when certain parameters of the construction are appropriately set.

\subsection{The extended hypercube}\label{subsec:extended_hypercube}

According to the notation in Subsection \ref{subsec:intro_hypercube}, we consider the hypercubes $Q:=[m]^d$ and $\widehat{Q}:=[2m]^d$, which satisfy $Q\subseteq \widehat{Q}$. We call $Q$ and $\widehat{Q}$ the \textit{base hypercube} and the \textit{extended hypercube}, respectively.

Two points $\mathbf{y}$ and $\mathbf{y}'$ in $\widehat{Q}$ will be called \textit{congruent} if for every $1\le j\le d$, we have $\mathbf{y}_j\equiv \mathbf{y}'_j\pmod{m}$. In this way, each point $\mathbf{x}\in Q$ has $2^d$ points congruent to it in $\widehat{Q}$, including itself: these points can be written $\mathbf{x}+m\mathbf{v}$ for some $\mathbf{v}\in \{0,1\}^d$.

The extended hypercube clearly satisfies the following intersection property with axis-parallel hyperplanes. Recall that $H(j,k)=\{\mathbf{x}\in \rr^d: \mathbf{x}_j=k\}$.
\begin{observation}\label{obs:hypercube_int_property}
Let $0\le \ell\le d$ be an integer. Given $J=\{j_1, j_2, \ldots, j_{\ell}\}\subseteq [d]$ and integers $k_1, k_2, \ldots, k_{\ell}\in [2m]$, the following holds:
\begin{equation*}
|\widehat{Q}\cap H(j_1, k_1)\cap \ldots\cap H(j_{\ell},k_{\ell})|=(2m)^{d-\ell}.
\end{equation*}
\end{observation}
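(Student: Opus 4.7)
The plan is straightforward: exploit the product structure of $\widehat{Q}=[2m]^d$ directly. A point $\mathbf{x}\in \widehat{Q}$ is a tuple $(\mathbf{x}_1, \ldots, \mathbf{x}_d)$ with each coordinate ranging independently over $[2m]$, and the intersection conditions pin down some of those coordinates.

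First I would note that since $J=\{j_1, \ldots, j_\ell\}$ is written as a set, the indices $j_1, \ldots, j_\ell$ are pairwise distinct. The hyperplane $H(j_i,k_i)=\{\mathbf{x}\in \rr^d : \mathbf{x}_{j_i}=k_i\}$ forces the $j_i$-th coordinate to equal $k_i$, and because $k_i\in [2m]$, this is a feasible value. Thus a point $\mathbf{x}\in \widehat{Q}$ lies in the full intersection if and only if $\mathbf{x}_{j_i}=k_i$ for all $1\le i\le \ell$.

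Next I would count: the $\ell$ constraints fix exactly $\ell$ distinct coordinates to prescribed values, while each of the remaining $d-\ell$ coordinates still ranges freely over $[2m]$. By independence of coordinates in the Cartesian product, the number of such points is exactly $(2m)^{d-\ell}$, as claimed.

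There is no real obstacle here; the statement is essentially a bookkeeping observation about how many lattice points survive when one fixes a prescribed subset of coordinates of a product set. The only thing to be careful about is confirming that the $j_i$ are distinct (so we are fixing $\ell$ different coordinates, not fewer) and that each $k_i$ lies in $[2m]$ (so none of the hyperplanes misses $\widehat{Q}$ entirely); both are built into the hypotheses.
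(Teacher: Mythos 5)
Your proof is correct and is exactly the counting argument the paper treats as self-evident (the statement is labelled an Observation and given without proof), so there is nothing to add. Fixing $\ell$ distinct coordinates of $[2m]^d$ to feasible values and letting the remaining $d-\ell$ coordinates range freely gives $(2m)^{d-\ell}$ points, as you say.
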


We now introduce a way to extend an arbitrary subset of $Q$ to $\widehat{Q}$ to obtain a set satisfying a similar intersection property with hyperplanes. For $\mathbf{v}\in \{0,1\}^d$, call $\mathbf{v}$ \textit{even}, or \textit{odd}, if the sum of its coordinates is even, or odd, respectively. If the even elements of $\{0,1\}^d$ are coloured dark and the odd elements light, this resembles a checkerboard pattern.

\begin{definition}[Checkered extension]\label{def:checkering}
Given a subset $S\subseteq Q$, define the \textit{checkered extension} of $S$ to be the following subset $S_c\subseteq \widehat{Q}$:
\begin{equation*}
\begin{split}
S_c &=\{\mathbf{y}=\mathbf{x}+m\mathbf{v}: (\mathbf{x}\in S, ~\mathbf{v}\in \{0,1\}^d, ~\mathbf{v}\text{ is even})\} \\
&\sqcup \{\mathbf{y}=\mathbf{x}+m\mathbf{v}: (\mathbf{x}\in Q\setminus S, ~\mathbf{v}\in \{0,1\}^d, ~\mathbf{v}\text{ is odd})\}.
\end{split}
\end{equation*}
\end{definition}

Figure~\ref{fig:checkerboard} shows an example for the checkered extension of a set in the case $d=2$.

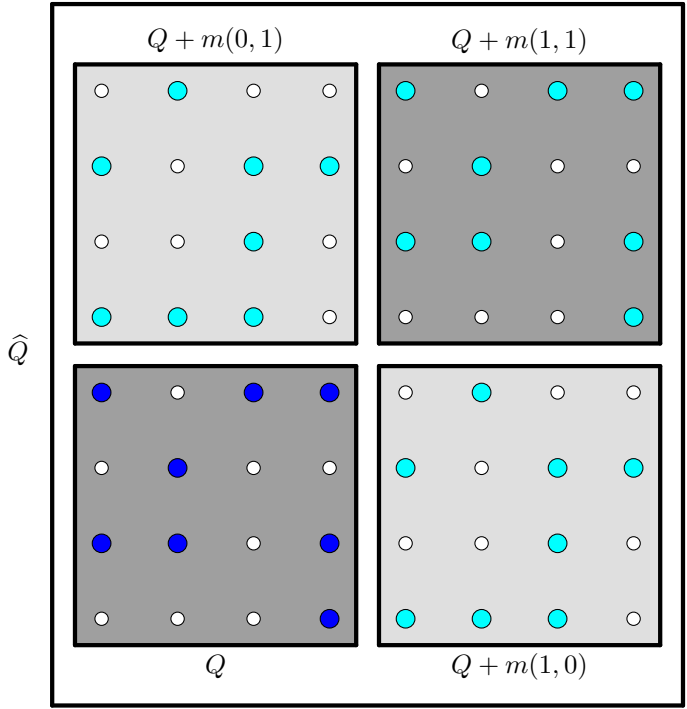
\begin{figure}
 \centering
\definecolor{cqcqcq}{rgb}{0.7529411764705882,0.7529411764705882,0.7529411764705882}
\definecolor{uququq}{rgb}{0.25098039215686274,0.25098039215686274,0.25098039215686274}
\definecolor{qqqqff}{rgb}{0.,0.,1.}
\definecolor{qqffff}{rgb}{0.,1.,1.}
\definecolor{ffffff}{rgb}{1.,1.,1.}
\begin{tikzpicture}[line cap=round,line join=round,>=triangle 45,x=1.0cm,y=1.0cm]
\clip(-1.4,-1.4) rectangle (8.4,8.4);
\fill[line width=0.pt,color=cqcqcq,fill=cqcqcq,fill opacity=0.5] (-0.35,7.35) -- (3.35,7.35) -- (3.35,3.65) -- (-0.35,3.65) -- cycle;
\fill[line width=0.pt,color=cqcqcq,fill=cqcqcq,fill opacity=0.5] (3.65,3.35) -- (7.35,3.35) -- (7.35,-0.35) -- (3.65,-0.35) -- cycle;
\fill[line width=0.pt,color=uququq,fill=uququq,fill opacity=0.5] (3.65,7.35) -- (7.35,7.35) -- (7.35,3.65) -- (3.65,3.65) -- cycle;
\fill[line width=0.pt,color=uququq,fill=uququq,fill opacity=0.5] (-0.35,3.35) -- (3.35,3.35) -- (3.35,-0.35) -- (-0.35,-0.35) -- cycle;
\draw [line width=1.6pt] (-0.35,7.35)-- (3.35,7.35);
\draw [line width=1.6pt] (3.35,7.35)-- (3.35,3.65);
\draw [line width=1.6pt] (3.35,3.65)-- (-0.35,3.65);
\draw [line width=1.6pt] (-0.35,3.65)-- (-0.35,7.35);
\draw [line width=1.6pt] (3.65,7.35)-- (3.65,3.65);
\draw [line width=1.6pt] (3.65,3.65)-- (7.35,3.65);
\draw [line width=1.6pt] (7.35,3.65)-- (7.35,7.35);
\draw [line width=1.6pt] (7.35,7.35)-- (3.65,7.35);
\draw [line width=1.6pt] (-0.35,3.35)-- (3.35,3.35);
\draw [line width=1.6pt] (3.35,3.35)-- (3.35,-0.35);
\draw [line width=1.6pt] (3.35,-0.35)-- (-0.35,-0.35);
\draw [line width=1.6pt] (-0.35,3.35)-- (-0.35,-0.35);
\draw [line width=1.6pt] (3.65,3.35)-- (3.65,-0.35);
\draw [line width=1.6pt] (3.65,-0.35)-- (7.35,-0.35);
\draw [line width=1.6pt] (7.35,-0.35)-- (7.35,3.35);
\draw [line width=1.6pt] (7.35,3.35)-- (3.65,3.35);
\draw [line width=1.6pt] (-0.65,8.15)-- (7.65,8.15);
\draw [line width=1.6pt] (7.65,8.15)-- (7.65,-1.15);
\draw [line width=1.6pt] (7.65,-1.15)-- (-0.65,-1.15);
\draw [line width=1.6pt] (-0.65,-1.15)-- (-0.65,8.15);
\begin{scriptsize}
\draw [fill=ffffff] (0.,0.) circle (2.5pt);
\draw [fill=qqqqff] (0.,1.) circle (3.5pt);
\draw [fill=ffffff] (0.,2.) circle (2.5pt);
\draw [fill=qqqqff] (0.,3.) circle (3.5pt);
\draw [fill=qqffff] (0.,4.) circle (3.5pt);
\draw [fill=ffffff] (0.,5.) circle (2.5pt);
\draw [fill=qqffff] (0.,6.) circle (3.5pt);
\draw [fill=ffffff] (0.,7.) circle (2.5pt);
\draw [fill=ffffff] (1.,0.) circle (2.5pt);
\draw [fill=qqqqff] (1.,1.) circle (3.5pt);
\draw [fill=qqqqff] (1.,2.) circle (3.5pt);
\draw [fill=ffffff] (1.,3.) circle (2.5pt);
\draw [fill=qqffff] (1.,4.) circle (3.5pt);
\draw [fill=ffffff] (1.,5.) circle (2.5pt);
\draw [fill=ffffff] (1.,6.) circle (2.5pt);
\draw [fill=qqffff] (1.,7.) circle (3.5pt);
\draw [fill=ffffff] (2.,0.) circle (2.5pt);
\draw [fill=ffffff] (2.,1.) circle (2.5pt);
\draw [fill=ffffff] (2.,2.) circle (2.5pt);
\draw [fill=qqqqff] (2.,3.) circle (3.5pt);
\draw [fill=qqffff] (2.,4.) circle (3.5pt);
\draw [fill=qqffff] (2.,5.) circle (3.5pt);
\draw [fill=qqffff] (2.,6.) circle (3.5pt);
\draw [fill=ffffff] (2.,7.) circle (2.5pt);
\draw [fill=qqqqff] (3.,0.) circle (3.5pt);
\draw [fill=qqqqff] (3.,1.) circle (3.5pt);
\draw [fill=ffffff] (3.,2.) circle (2.5pt);
\draw [fill=qqqqff] (3.,3.) circle (3.5pt);
\draw [fill=ffffff] (3.,4.) circle (2.5pt);
\draw [fill=ffffff] (3.,5.) circle (2.5pt);
\draw [fill=qqffff] (3.,6.) circle (3.5pt);
\draw [fill=ffffff] (3.,7.) circle (2.5pt);
\draw [fill=qqffff] (4.,0.) circle (3.5pt);
\draw [fill=ffffff] (4.,1.) circle (2.5pt);
\draw [fill=qqffff] (4.,2.) circle (3.5pt);
\draw [fill=ffffff] (4.,3.) circle (2.5pt);
\draw [fill=ffffff] (4.,4.) circle (2.5pt);
\draw [fill=qqffff] (4.,5.) circle (3.5pt);
\draw [fill=ffffff] (4.,6.) circle (2.5pt);
\draw [fill=qqffff] (4.,7.) circle (3.5pt);
\draw [fill=qqffff] (5.,0.) circle (3.5pt);
\draw [fill=ffffff] (5.,1.) circle (2.5pt);
\draw [fill=ffffff] (5.,2.) circle (2.5pt);
\draw [fill=qqffff] (5.,3.) circle (3.5pt);
\draw [fill=ffffff] (5.,4.) circle (2.5pt);
\draw [fill=qqffff] (5.,5.) circle (3.5pt);
\draw [fill=qqffff] (5.,6.) circle (3.5pt);
\draw [fill=ffffff] (5.,7.) circle (2.5pt);
\draw [fill=qqffff] (6.,0.) circle (3.5pt);
\draw [fill=qqffff] (6.,1.) circle (3.5pt);
\draw [fill=qqffff] (6.,2.) circle (3.5pt);
\draw [fill=ffffff] (6.,3.) circle (2.5pt);
\draw [fill=ffffff] (6.,4.) circle (2.5pt);
\draw [fill=ffffff] (6.,5.) circle (2.5pt);
\draw [fill=ffffff] (6.,6.) circle (2.5pt);
\draw [fill=qqffff] (6.,7.) circle (3.5pt);
\draw [fill=ffffff] (7.,0.) circle (2.5pt);
\draw [fill=ffffff] (7.,1.) circle (2.5pt);
\draw [fill=qqffff] (7.,2.) circle (3.5pt);
\draw [fill=ffffff] (7.,3.) circle (2.5pt);
\draw [fill=qqffff] (7.,4.) circle (3.5pt);
\draw [fill=qqffff] (7.,5.) circle (3.5pt);
\draw [fill=ffffff] (7.,6.) circle (2.5pt);
\draw [fill=qqffff] (7.,7.) circle (3.5pt);
\end{scriptsize}
\draw (-1.1,3.5) node[anchor=mid] {$\widehat{Q}$};
\draw (1.5,7.65) node[anchor=mid] {$Q+m(0,1)$};
\draw (1.5,-0.65) node[anchor=mid] {$Q$};
\draw (5.5,7.65) node[anchor=mid] {$Q+m(1,1)$};
\draw (5.5,-0.65) node[anchor=mid] {$Q+m(1,0)$};
\end{tikzpicture}
\caption{An example for $d=2$ and $m=4$, with the subset $S\subseteq Q$ shown in dark blue, and the additional points extending $S$ to $S_c$ shown in light blue. The parts of $\widehat{Q}$ belonging to each vector $\mathbf{v}$ are coloured according to the parity of $\mathbf{v}$.}
\label{fig:checkerboard}
\end{figure}

An analogue of Observation \ref{obs:hypercube_int_property} for the checkered extension of any set is the following:
\begin{lemma}\label{lem:checkered_property}
Let $0\le \ell\le d-1$ be an integer. Given $J=\{j_1, j_2, \ldots, j_{\ell}\}\subseteq [d]$ and integers $k_1, k_2, \ldots, k_{\ell}\in [m]$, and any subset $S\subseteq Q$, the following holds:
$$|S_c\cap H(j_1, k_1)\cap \ldots\cap H(j_{\ell},k_{\ell})|=\frac12\cdot (2m)^{d-\ell}.$$
\end{lemma}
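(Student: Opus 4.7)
The idea is to rewrite the intersection via the unique decomposition $\mathbf{y} = \mathbf{x} + m\mathbf{v}$ with $\mathbf{x} \in Q$ and $\mathbf{v} \in \{0,1\}^d$ that every $\mathbf{y} \in \widehat{Q}$ admits, and then count using a parity argument on $\mathbf{v}$.

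First I would translate the hyperplane constraints. Since $k_s \in [m]$, the equation $\mathbf{y}_{j_s} = k_s$ is equivalent to $\mathbf{v}_{j_s} = 0$ together with $\mathbf{x}_{j_s} = k_s$ (any $\mathbf{v}_{j_s} = 1$ would push $\mathbf{y}_{j_s}$ into $[m+1, 2m]$). Therefore the intersection $\widehat{Q} \cap H(j_1,k_1) \cap \ldots \cap H(j_\ell,k_\ell)$ is parametrised by pairs $(\mathbf{x}, \mathbf{v})$ where $\mathbf{x}_{j_s} = k_s$ for all $s$ (so $\mathbf{x}$ has $m^{d-\ell}$ admissible choices) and $\mathbf{v}_{j_s} = 0$ for all $s$ (so $\mathbf{v}$ has $2^{d-\ell}$ admissible choices), which as a sanity check recovers Observation~\ref{obs:hypercube_int_property}.

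Next I would use Definition~\ref{def:checkering} to single out the admissible pairs lying in $S_c$: for a fixed admissible $\mathbf{x}$, the point $\mathbf{x} + m\mathbf{v}$ belongs to $S_c$ exactly when $\mathbf{v}$ is \emph{even} (in case $\mathbf{x} \in S$) or \emph{odd} (in case $\mathbf{x} \notin S$). The parity of an admissible $\mathbf{v}$ depends only on its $d - \ell$ free coordinates, because the prescribed coordinates are all zero. Since $\ell \le d - 1$, at least one coordinate of $\mathbf{v}$ is free, and flipping it defines a bijection between admissible $\mathbf{v}$'s of even and odd parity; hence each parity class contains exactly $2^{d-\ell-1}$ vectors. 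Consequently every admissible $\mathbf{x}$ contributes exactly $2^{d-\ell-1}$ points to $S_c \cap H(j_1,k_1) \cap \ldots \cap H(j_\ell,k_\ell)$, regardless of whether $\mathbf{x} \in S$ or not.

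Summing over the $m^{d-\ell}$ admissible $\mathbf{x}$'s yields the total $m^{d-\ell} \cdot 2^{d-\ell-1} = \tfrac{1}{2}(2m)^{d-\ell}$, as claimed. The heart of the argument is this parity-balance step, which is exactly what makes the count independent of the set $S$; the hypothesis $\ell \le d - 1$ is essential, as otherwise (when $\ell = d$) the intersection would reduce to the single point $(k_1, \ldots, k_d)$ and its congruent copies in $\widehat{Q}$, and membership in $S_c$ would depend on whether $(k_1, \ldots, k_d) \in S$. I do not anticipate a serious obstacle beyond setting up the $(\mathbf{x}, \mathbf{v})$ bookkeeping cleanly.
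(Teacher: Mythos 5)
Your argument is correct and follows essentially the same route as the paper's proof: decompose each $\mathbf{y}\in\widehat{Q}$ as $\mathbf{x}+m\mathbf{v}$, translate the hyperplane constraints into $\mathbf{x}_{j_s}=k_s$ and $\mathbf{v}_{j_s}=0$, and then use the parity balance among the $2^{d-\ell}$ admissible $\mathbf{v}$'s (the paper phrases the even/odd split via the ``last non-restricted coordinate,'' you phrase it as a flip bijection, but these are the same observation). No gaps.
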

\begin{proof}
Let $F=H(j_1, k_1)\cap \dots\cap H(j_{\ell}, k_{\ell})$. Then $F$ intersects $\widehat{Q}$ in the following set of size $(2m)^{d-\ell}$:
\begin{equation*}
\begin{split}
F\cap \widehat{Q} &=\{\mathbf{y}\in \widehat{Q}: \mathbf{y}_{j_a}=k_a\text{ for each $1\le a\le \ell$}\} \\
&=\{\mathbf{y}=\mathbf{x}+m\mathbf{v}\in \widehat{Q}: (\mathbf{x}_{j_a}=k_a \text{ and }\mathbf{v}_{j_a}=0)\text{ for each $1\le a\le \ell$}\}.
\end{split}
\end{equation*}

There are $m^{d-\ell}$ vectors $\mathbf{x}\in Q$ such that $\mathbf{x}_{j_a}=k_a$ for all $a$, and $2^{d-\ell}$ vectors $\mathbf{v}\in \{0,1\}^d$ such that $\mathbf{v}_{j_a}=0$ for all $a$. Out of the latter vectors $\mathbf{v}$, precisely half of them are even and half odd (as the parity of the coordinate sum can be set using the last non-restricted coordinate).

To get an element of $S_c$, we need to pick even $\mathbf{v}$ in the case $\mathbf{x}\in S$ and odd $\mathbf{v}$ in the case $\mathbf{x}\not\in S$. As we just saw, there are $2^{d-\ell-1}$ good choices for each $\mathbf{x}$, giving $|F\cap S_c|=m^{d-\ell}\cdot 2^{d-\ell-1}=\frac12\cdot (2m)^{d-\ell}$.
\end{proof}

\begin{remark}\label{rem:checkered_property_0}
By the special case $\ell=0$, we have $|S_c|=\frac12\cdot (2m)^d$ for any $S\subseteq Q$.
\end{remark}

\subsection{The vertices and edges}\label{subsec:vertex_labellings}

We now describe in detail the vertices and edges of our variable graph $G$, which is a partial join of $G_L$ and $G_R$. We take the left-hand class $G_L$ to be the empty graph on the $(m^{d+1})$-vertex set $V_L$, which is the disjoint union of the following sets:
$$V_L=\widehat{Q} \sqcup A_1\sqcup A_2\sqcup \ldots\sqcup A_{d+1},$$

\noindent where
\begin{itemize}
\itemsep0em
\item $\widehat{Q}$ is the \textit{extended hypercube} that we just described, labelled as $\widehat{Q}=[2m]^d$.
\item for each $1\le j\le d$, $A_j$ is called the \textit{$j$-th additional set}, and it has cardinality $n_j=m^{j-1}(m-1)$. Altogether, $\sum_{j=1}^d n_j=m^d-1$.
\item $A_{d+1}$ is called the \textit{special additional set}, and it consists of the remaining $n_{d+1}=m^{d+1}-(2^d+1)m^d+1$ vertices. Note that this number is positive because of the assumption $m>2^d$.
\end{itemize}

\noindent We let $A_j=\{a_{j,1}, \dots, a_{j,n_j}\}$ for each $1\le j\le d+1$.

The right-hand class is $G_R=(d+1)K_m$. For each $1\le i\le d+1$, call the vertex set of the $j$-th of these complete graphs $W_j=\{w_{j,1}, w_{j,2}, \ldots, w_{j,m}\}$, so that $V_R=W_1\sqcup \ldots \sqcup W_{d+1}$.\bigskip

\noindent We now describe the edges of the partial join $G$ between $V_L$ and $V_R$, some of which will be always present in $G$ (called \textit{fixed edges}), while others may be either present or not present in $G$ (called \textit{variable edges}). A partial join can be defined by giving the sets $N_L(w)$, that is, the set of neighbours of $w$ in $V_L$, for each $w\in V_R$. The fixed edges will be a subset of $K[V_L, W_1\sqcup \ldots\sqcup W_d]$, while the variable edges will be a subset of $K[V_L, W_{d+1}]$.\bigskip

\noindent For any $j\in [d]$ and $k\in [m]$, let $s(j,k)=(k-1)m^{j-1}$. Let
$$N_L(w_{j,k})=\left(\widehat{Q}\cap H(j,k)\right) ~\sqcup~ \{a_{j,\ell}: 1\le \ell\le s(j,k)\}.$$

\noindent Observe that since $|\widehat{Q}\cap H(j,k)|=(2m)^{d-1}$ for all $j,k$, we have
$$|N_L(w_{j,k})|=(2m)^{d-1}+s(j,k).$$

\noindent Also note that no vertex in $W_1\sqcup \ldots\sqcup W_d$ has any neighbour in $A_{d+1}$.\bigskip

\noindent Now we give the variable edges. For each $1\le k\le m$, we fix an integer $t_k$, which will be chosen later on, with a dependence on $d,m,\eps$. The chosen integers $t_k$ will satisfy
$$2^{d-1}m^d\le t_k\le 2^{d-1}m^d+n_{d+1}=m^{d+1}-(2^{d-1}+1)m^d+1.$$

Now let us choose any sets $S_1, S_2, \ldots, S_m\subseteq Q=[m]^d$, and consider their checkered extensions $(S_1)_c, \ldots, (S_m)_c\subseteq \widehat{Q}=[2m]^d$. The sets $S_k$ can be chosen completely arbitrarily, and they are responsible for creating the variability of the construction, and hence of the Fibonacci number of the partial join $G$.

If $t_k=2^{d-1}m^d+t_k^{*}$ (where $0\le t_k^{*}\le n_{d+1}$), then let
$$N_L(w_{d+1,k})=(S_k)_c \sqcup \{a_{d+1,1}, \ldots, a_{d+1,{t_k^{*}}}\}.$$

In particular, note that no vertex in $W_{d+1}$ has any neighbour in $A_1\sqcup \ldots\sqcup A_d$. We have $|N_L(w_{d+1,k})|=\frac12\cdot (2m)^d+t_k^{*}=t_k$ by Remark \ref{rem:checkered_property_0}.

\subsection{Computation of $i(G)$}\label{subsec:computation}

The goal of this subsection (and the subsequent one) is to prove Proposition \ref{prop:long_interval}, that is, to show that $\Sp(G_L, G_R)$ contains $\cB(J)+c$, where $J$ is a long interval. More specifically, our variable construction will have the property in Proposition \ref{prop:toggles}, from which Proposition \ref{prop:long_interval} directly follows.

We will use the notion of full and partial transversals introduced in Subsection \ref{subsec:intro_transversals} in relation to the graph $G_R$ on vertex set $V_R=W_1\sqcup \ldots \sqcup W_{d+1}$, which satisfy $m_1=\ldots=m_{d+1}=m$. The notation $T=T(k_1, \dots, k_d,k)$ will be a shorthand for the full transversal
$$T=\{w_{1,k_1}, w_{2,k_2}, \ldots, w_{d,k_d}, w_{d+1,k}\},$$ where $k_1,\ldots,k_d,k\in [m]$.

\begin{proposition}\label{prop:toggles}
There is a constant $c\in \nn$, depending on $d,m,\eps$, for which the following holds. There exist (not necessarily distinct) natural numbers $\ell(T)$ for each full transversal $T$, such that
$$i(G)=\left(\sum_T \mathbbm{1}_{A(T)} 2^{\ell(T)}\right)+c,$$
where the sum is taken over all full transversals $T=T(k_1,\ldots, k_d,k)$, and for each full transversal, $A(T)$ is either the event $(k_1, \ldots, k_d)\in S_k$ or $(k_1, \ldots, k_d)\not\in S_k$. Moreover, the set of these natural numbers $\ell(T)$ contains the interval $\left[\left\lceil(1+\eps)2^{d-1}m^d\right\rceil,~ \left\lfloor m^{d+1}-(1+\eps)2^{d-1}m^d\right\rfloor\right]$.
\end{proposition}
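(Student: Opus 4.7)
The strategy is to apply the summation trick (Proposition \ref{prop:summationtrick}) and then track how the sets $S_1,\ldots,S_m$ influence each term. Since $G_L$ is empty, every induced subgraph of $G_L$ is empty, so
\[
i(G) \;=\; \sum_{T_R} 2^{|V_L \setminus \bigcup_{v_R\in T_R} N_L(v_R)|},
\]
where the sum ranges over all partial transversals $T_R$ of $G_R$. I stratify by the support $\tau\subseteq[d+1]$ of $T_R$. Terms with $d+1\notin\tau$ clearly contribute independently of the $S_k$. For $d+1\in\tau$ with $\tau\ne[d+1]$, I partition $\widehat Q$ into its congruence classes $\{\mathbf x+m\mathbf v:\mathbf v\in\{0,1\}^d\}$ indexed by $\mathbf x\in Q$. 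A point $\mathbf x+m\mathbf v$ avoids every $H(j,k_j)$ with $j\in\tau\setminus\{d+1\}$ iff $\mathbf v_j=1$ for every $j\in(\tau\setminus\{d+1\})\cap J(\mathbf x)$, where $J(\mathbf x):=\{j:\mathbf x_j=k_j\}$; this leaves at least one free coordinate of $\mathbf v$ since $\tau\setminus\{d+1\}\subsetneq[d]$. Hence for every $\mathbf x$ the admissible $\mathbf v$'s split equally between the two parities, so the number of admissible $\mathbf v$'s with $\mathbf x+m\mathbf v\in(S_k)_c$ is exactly half of the total, independently of whether $\mathbf x\in S_k$. These contributions are therefore $S_k$-independent and get absorbed into $c$.

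For a full transversal $T=T(k_1,\ldots,k_d,k)$, the same argument handles every class $\mathbf x\ne(k_1,\ldots,k_d)$ and these together contribute
\[
M \;=\; \sum_{\mathbf x\ne(k_1,\ldots,k_d)} 2^{d-|J(\mathbf x)|-1} \;=\; \sum_{r=0}^{d-1}\binom{d}{r}(m-1)^{d-r}2^{d-r-1} \;=\; \tfrac12\bigl((2m-1)^d-1\bigr)
\]
to $|(S_k)_c \cap (\widehat Q\setminus\bigcup_{j=1}^d H(j,k_j))|$, which is remarkably independent of $(k_1,\ldots,k_d)$. The remaining class $\mathbf x=(k_1,\ldots,k_d)$ forces $\mathbf v=\mathbf 1$, contributing an additional $\mathbbm 1_{A_*(T)}$, where $A_*(T)$ equals $[(k_1,\ldots,k_d)\in S_k]$ if $d$ is even and $[(k_1,\ldots,k_d)\notin S_k]$ if $d$ is odd. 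Counting the uncovered vertices in the $A_j$'s as well ($m^{j-1}(m-k_j)$ in $A_j$ for $j\le d$ and $n_{d+1}-t_k^*$ in $A_{d+1}$), the total uncovered count equals $\beta(T)-\mathbbm 1_{A_*(T)}$ with
\[
\beta(T) \;=\; \tfrac{(2m-1)^d+1}{2} + X(k_1,\ldots,k_d) + n_{d+1} - t_k^*,
\]
where $X(k_1,\ldots,k_d):=\sum_{j=1}^d m^{j-1}(m-k_j)$ is a base-$m$ bijection $[m]^d\to[0,m^d-1]$.

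The identity $2^{\beta-\mathbbm 1_{A_*}}=2^{\beta-1}+\mathbbm 1_{\neg A_*}\cdot 2^{\beta-1}$ lets me absorb $2^{\beta(T)-1}$ into $c$ for each $T$ and set $\ell(T):=\beta(T)-1$ and $A(T):=\neg A_*(T)$, giving the claimed decomposition of $i(G)$. It remains to choose $t_1,\ldots,t_m$ so that $\{\ell(T):T\text{ full}\}$ covers the target interval. Writing $t_k^*=t_k-2^{d-1}m^d$ gives $\ell(T)=C_0+X(k_1,\ldots,k_d)-t_k$ for a constant $C_0$; thus for each fixed $k$ the possible $\ell(T)$ form an integer interval of length $m^d$. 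Arranging $t_1,\ldots,t_m$ in the allowed window $[2^{d-1}m^d,\,m^{d+1}-(2^{d-1}+1)m^d+1]$ in arithmetic progression with step $s\le m^d$ makes the $m$ intervals cover (with overlap) a single interval of length $(m-1)s+m^d$; a direct arithmetic check reduces the required coverage to the inequality $(1-\tfrac{1}{2m})^d\ge 1-\eps$, which holds whenever $m>2^d$ and $d\ge d_0(\eps)$. The main technical obstacles are verifying the $S_k$-independence for proper partial transversals containing $w_{d+1,k}$ and the final coverage computation.
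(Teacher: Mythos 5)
Your proof is correct, and it reaches the same decomposition as the paper but via a genuinely different computation. The paper computes $|U_T|=\bigl|\bigcup_{w\in T}N_L(w)\bigr|$ by inclusion--exclusion over all nonempty $T'\subseteq T$, organised into two cases depending on whether $T'$ meets $W_{d+1}$; the reduction of partial transversals $|T|\le d$ to a constant is packaged as a separate observation, and the key term $\frac12\bigl((2m)^d-(2m-1)^d+1\bigr)$ emerges from an alternating binomial sum. You instead compute the uncovered count $|V_L\setminus U_T|$ directly by partitioning $\widehat Q$ into the $m^d$ congruence classes $\{\mathbf x+m\mathbf v:\mathbf v\in\{0,1\}^d\}$ and observing a parity-balance within each class: as long as at least one coordinate of $\mathbf v$ is unconstrained (which happens for every proper partial transversal, and for every $\mathbf x\ne(k_1,\dots,k_d)$ in the full-transversal case), the admissible $\mathbf v$'s split evenly between parities, so the overlap with $(S_k)_c$ is automatically half the total, independent of $S_k$. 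This makes the cancellation that yields $M=\tfrac12\bigl((2m-1)^d-1\bigr)$ transparent rather than the byproduct of an inclusion--exclusion identity, and it unifies the treatment of proper partial transversals and the ``constant'' part of full transversals. Your $\ell(T)=\beta(T)-1$ agrees exactly with the paper's $\ell(T)=m^{d+1}-\lambda_T$, and your toggle $A(T)=\neg A_*(T)$ matches the parity-of-$d$ case split in the paper. The final coverage is only sketched; the paper instantiates the arithmetic progression concretely as $t_k=2^{d-1}m^d+(k-1)m^d$ for $k\le m-2^d$ (setting the remaining $t_k$ to a constant), which is one legitimate choice of your step $s\le m^d$, and your reduction to $(1-\tfrac1{2m})^d\ge1-\eps$ is exactly the content of the paper's bound $\tfrac12\bigl((2m)^d-(2m-1)^d+1\bigr)\le\eps\,2^{d-1}m^d$; to close this step fully you would still want to spell out the integer step size and the endpoint check as the paper does.
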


Since the properties $(k_1, \dots, k_d)\in S_k$ can be independently chosen to hold or not to hold using the variable edges of our construction, it is indeed sufficient to show Proposition \ref{prop:toggles}.

To compute the number of independent subsets in the partial join $G$, we apply the summation trick in Example \ref{ex:multiplecliques}, bearing in mind that in our case $G_L$ is an empty graph, so we have $i(G_L[X])=2^{|X|}$ for any subset $X\subseteq V_L$:

\begin{equation}\label{eq:ig_as_sum}
i(G)=\sum_T (2^{|V_L\setminus \bigcup_{w\in T} N_L(w)|}: \text{$T$ is a partial transversal}).
\end{equation}

For a partial transversal $T$, let $U_T=\bigcup_{w\in T} N_L(w)$. To calculate $|U_T|$ for a specific $T\ne \emptyset$, we will use the inclusion-exclusion principle:

\begin{equation}\label{eq:inc_exc}
|U_T|=\sum_{\substack{T'\subseteq T\\ T'\ne\emptyset}} (-1)^{|T'|+1} \left|\bigcap_{w\in T'} N_L(w)\right|.
\end{equation}

Letting $T'\subseteq T$ be a nonempty subset and $N=\bigcap\limits_{w\in T'} N_L(w)$, we first present the calculation of $|N|$. Suppose that $T'=\{w_{j_1, k_1}, \ldots, w_{j_{\ell}, k_{\ell}}\}$ for some $1\le j_1 < \ldots<j_{\ell}\le d+1$ and $k_1,\ldots,k_{\ell}\in [m]$. We consider the following two cases.

\begin{description}[style=nextline,leftmargin=1em]
	\item[Case 1: $T'\cap W_{d+1}=\emptyset$.]
    In this case, if $\ell=1$, say $T'=\{w_{j,k}\}$, we clearly have $|N|=|N_L(w_{j,k})|=(2m)^{d-1}+s(j,k)$.

    If $\ell\ge 2$, then since $N_L(w_{j,k})\setminus \widehat{Q}\subseteq A_j$, the sets only intersect in $\widehat{Q}$. The intersection is $N = \widehat{Q}\cap H(j_1, k_1)\cap \ldots\cap H(j_{\ell},k_{\ell})$,
    and by Observation \ref{obs:hypercube_int_property}, we have $|N|=(2m)^{d-\ell}$.

    \item[Case 2: $T'\cap W_{d+1}\ne\emptyset$.]
    Let $T'\cap W_{d+1}=\{w_{d+1, k}\}$, so $j_{\ell}=d+1$ and $k_{\ell}=k$.

    If $\ell=1$, that is, $T'=\{w_{d+1,k}\}$, then clearly $|N|=|N_L(w_{d+1,k})|=t_k$.

    When $\ell\in [2,d]$, the sets only intersect in $\widehat{Q}$. By Lemma \ref{lem:checkered_property}, we have
    $$|N|=|(S_k)_c\cap H(j_1,k_1)\cap \ldots\cap H(j_{\ell-1},k_{\ell-1})|=\frac12\cdot (2m)^{d-\ell+1}.$$

    And when $\ell=d+1$, that is, $T'$ contains one vertex from each $W_j$, then
    $$|N|=|(S_k)_c\cap H(1,k_1)\cap \ldots\cap H(d,k_d)|.$$

    The $d$ hyperplanes $H(1,k_1)$, \ldots, $H(d,k_d)$ meet in a single point $\mathbf{y}=(k_1,\ldots,k_d)\in Q$, and $(S_k)_c\cap Q=S_k$, so whether or not $\mathbf{y}$ is included in $S_k$ decides whether $|N|$ is 1 or 0, that is,
    $$|N|=\mathbbm{1}_{(k_1, \ldots, k_d)\in S_k}.$$
\end{description}

\noindent Now for an arbitrary set $T\ne \emptyset$ in the summation of \eqref{eq:ig_as_sum}, we proceed to calculate $|U_T|$ using \eqref{eq:inc_exc}.

Using our computations of $|N|$ in various cases for the subset $T'$, we obtain that the only case where $|N|$ depended on the choices of $S_1,\ldots,S_m$ was when $|T'|=d+1$. Such a case is only reached in the calculation of $|U_T|$ when $|T|=d+1$, leading to the following observation.

\begin{observation}\label{obs:us_constant_if}
If $0\le |T|\le d$ holds for a set $T$ appearing in the summation of \eqref{eq:ig_as_sum}, then $|U_T|$ is a constant independent of $S_1,\ldots,S_m$.
\end{observation}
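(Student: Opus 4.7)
The plan is to read the conclusion off the inclusion--exclusion expansion \eqref{eq:inc_exc} combined with the case analysis of $|N[T']|$ just carried out. First I would write
\[|U_T| = \sum_{\emptyset \ne T' \subseteq T} (-1)^{|T'|+1}\,|N[T']|,\]
and note that every $T'$ appearing here satisfies $1 \le |T'| \le |T| \le d$, so in the subsequent case distinction only subsets with $\ell = |T'| \le d$ ever contribute.

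Next I would revisit each sub-case of the preceding computation of $|N[T']|$ and verify, one branch at a time, that as long as $\ell \le d$, the value $|N[T']|$ depends only on data that is fixed before the sets $S_1,\ldots,S_m$ are chosen: namely on $d$, $m$, the parameters $t_k$, and the pairs $(j_a,k_a)$ indexing the vertices of $T'$. In Case 1 this is immediate, since the formulas $(2m)^{d-1} + s(j,k)$ and $(2m)^{d-\ell}$ make no reference to any $S_k$. In Case 2 with $\ell = 1$, the value is $t_k$, fixed in advance; and in Case 2 with $2 \le \ell \le d$, Lemma \ref{lem:checkered_property} yields $\tfrac{1}{2}(2m)^{d-\ell+1}$, which is independent of the particular set $S_k$ whose checkered extension defines $N_L(w_{d+1,k})$. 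The only branch of the case analysis where $S_k$ enters $|N[T']|$ is Case 2 with $\ell = d+1$, producing the indicator $\mathbbm{1}_{(k_1,\ldots,k_d)\in S_k}$.

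Finally, that last branch requires $|T'| = d+1$, which is excluded by the hypothesis $|T| \le d$. Consequently every term of the inclusion--exclusion sum for $|U_T|$ is a constant in the $S_k$'s, and hence so is $|U_T|$ itself. I do not expect any real obstacle here: the observation is essentially a bookkeeping remark that localises the $S_k$-dependence of the construction to the single branch of the case analysis, and it is precisely this localisation that later allows Proposition \ref{prop:toggles} to separate the variable contribution of $i(G)$ (the full transversals, $|T| = d+1$) from the constant contribution $c$ (the shorter $T$'s).
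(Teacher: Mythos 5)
Your proof is correct and takes the same approach the paper does: the paper's (terse) justification is precisely the observation that the preceding case analysis makes $|N[T']|$ depend on the $S_k$'s only in the branch $|T'|=d+1$, and when $|T|\le d$ that branch never arises in the inclusion--exclusion sum \eqref{eq:inc_exc}. You merely spell out, branch by branch, what the paper states in one sentence; the key point you correctly highlight is that Lemma \ref{lem:checkered_property} guarantees the $\ell\le d$ intersections involving $(S_k)_c$ have cardinality independent of $S_k$, so the $S_k$-dependence really is confined to the single $\ell=d+1$ case.
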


So we focus the case $|T|=d+1$, that is, when $T=T(k_1,\ldots,k_d,k)$ is a full transversal. In this case, $|U_T|$ will contain exactly one term which depends on the choices of $S_1,\ldots,S_m$, coming from the case $T'=T$, where we get $|N|=\mathbbm{1}_{(k_1, \ldots, k_d)\in S_k}$. Considering the overall sum, we have
\begin{equation*}
\begin{split}
|U_T| &= \sum_{j=1}^d \left((2m)^{d-1}+s(j,k_j)\right) + \sum_{\substack{|T'|\ge 2 \\ T'\cap W_{d+1}=\emptyset}} (-1)^{|T'|+1} (2m)^{d-|T'|}\\
&+ t_k + \sum_{\substack{|T'|\in [2,d]\\ T'\cap W_{d+1}\ne \emptyset}} (-1)^{|T'|+1}\cdot \frac12\cdot(2m)^{d-|T'|+1} + (-1)^{d+2}\cdot \mathbbm{1}_{(k_1, \ldots, k_d)\in S_k}.
\end{split}
\end{equation*}

Let $\sigma_{k_1,\ldots,k_d}:=\sum_{j=1}^d s(j,k_j)=\sum_{j=1}^d (k_j-1) m^{j-1}$. So $k_1,\ldots,k_d\in [m]$ determine the $d$ rightmost base $m$ digits of $\sigma_{k_1,\ldots,k_d}$, which can hence be any value in $[0, m^d-1]$, with each value in this interval given by exactly one choice of $(k_1, \ldots, k_d)$. We get
\begin{equation*}
\begin{split}
|U_T| &= t_k + \sigma_{k_1, \ldots, k_d} + (-1)^d\cdot \mathbbm{1}_{(k_1, \ldots, k_d)\in S_k} + \sum_{\ell=1}^d (-1)^{\ell+1}\binom{d}{\ell}(2m)^{d-\ell} + \frac12\sum_{\ell=2}^d (-1)^{\ell+1} \binom{d}{\ell-1}(2m)^{d-\ell+1}\\
&= t_k + \sigma_{k_1,\ldots,k_d} + (-1)^d\cdot \mathbbm{1}_{(k_1, \ldots, k_d)\in S_k} + \frac12\left((2m)^d-(2m-1)^d+(-1)^{d+1}\right).
\end{split}
\end{equation*}

So depending on whether $(k_1, \ldots, k_d)\in S_k$ or not, $|U_T|$ is either $\lambda_T-1$ or $\lambda_T$, where \begin{equation}\label{eq:lambdat}
\lambda_T=t_k+\sigma_{k_1,\ldots,k_d}+\frac12\left((2m)^d-(2m-1)^d+1\right),
\end{equation}
and the case giving the larger value depends on the parity of $d$. An easy estimate gives $(2m)^d-(2m-1)^d+1\in \left[d(2m-1)^{d-1}, d(2m)^{d-1}\right]$.

Now in \eqref{eq:ig_as_sum}, we altogether have
$$i(G)=c_1+\sum_T 2^{m^{d+1}-|U_T|}=c_1+\sum_T 2^{m^{d+1}-\lambda_T} + \sum_T 2^{m^{d+1}-\lambda_T}\cdot \mathbbm{1}_{A(T)} = c_2 + \sum_T \mathbbm{1}_{A(T)}\cdot 2^{m^{d+1}-\lambda_T},$$
where $T$ runs through all full transversals, $A(T)$ denotes either the event $(k_1, \ldots, k_d)\in S_k$ or its negation for each $T$, and $c_1,c_2\in \nn$ are constants depending only on $d,m,\eps$. 

So in order to prove Proposition \ref{prop:toggles}, all that remains is to choose the values $t_1, \ldots, t_m$ so that the set $\{\ell(T)\}$ contains the interval
$$J=\left[\left\lceil(1+\eps)2^{d-1}m^d\right\rceil,~ \left\lfloor m^{d+1}-(1+\eps)2^{d-1}m^d\right\rfloor\right],$$ where $\ell(T)=m^{d+1}-\lambda_T$.

\subsection{Fixing the choices of $t_k$}\label{subsec:fixingtkchoices}

Recall from \eqref{eq:lambdat} that for a full transversal $T=T(k_1, \ldots, k_d, k)$, we have  $$\lambda_T=c_3+ t_k + \sigma_{k_1, \ldots, k_d},$$ where $c_3=\frac12((2m)^d-(2m-1)^d+1)\in \nn$ is an overall constant, and $\sigma_{k_1, \ldots, k_d}$ is a different value in $[0,m^d-1]$ for each $(k_1, \ldots, k_d)$.

Also recall that we need to choose the values $t_k$ for each $1\le k\le m$ such that $$2^{d-1}m^d\le t_k\le m^{d+1}-(2^{d-1}+1)m^d+1.$$

If the first $\ell$ values $t_1, t_2, \ldots, t_{\ell}$ form an arithmetic progression of difference $m^d$, that is, $t_k=t_1+(k-1)m^d$ for all $k\le \ell$, this guarantees that the corresponding values $\lambda_T$ for $k\le \ell$ cover a single interval of natural numbers. In particular, let $t_1=2^{d-1}m^d$, meaning that $$t_{m-2^d}=2^{d-1}m^d+(m-2^d-1)m^d=m^{d+1}-(2^{d-1}+1)m^d$$ is the last element that fits within the upper bound.

So we let $\ell=m-2^d$, and we simply set the remaining values $t_k$ ($k>\ell$) to a constant, say $t_k=2^{d-1}m^d$, meaning that the corresponding vertices of $W_{d+1}$ are not actually required for our statements, and it would have sufficed to pick $|W_{d+1}|=m-2^d$. However, we chose $|W_{d+1}|=m$ for the sake of simplicity in presenting the construction.

Altogether, the values $t_k+\sigma_{k_1, \ldots, k_d}$ cover the interval $$J_1=[t_1, t_{m-2^d}+m^d-1]=[2^{d-1}m^d, m^{d+1}-2^{d-1}m^d-1],$$ and we have $c_3\in \left[\frac12d(2m-1)^{d-1}, \frac12d(2m)^{d-1}\right]$, where by Bernoulli's inequality (using $m>2^d$), $(2m-1)^{d-1}\ge 0.99(2m)^{d-1}$ for $d$ large enough. So the values $\lambda_T$ cover the interval
\begin{equation*}
\begin{split}
& \left[2^{d-1}m^d+\frac12d\cdot 2^{d-1}m^{d-1}, m^{d+1}-2^{d-1}m^d+\left\lfloor\frac12\cdot 0.99 d\cdot 2^{d-1}m^{d-1}-1\right\rfloor\right] \\
\supseteq & \left[\left\lceil((1+\eps)2^{d-1}m^d\right\rceil, \left\lfloor m^{d+1}-(1+\eps)2^{d-1}m^d\right\rfloor\right]
\end{split}
\end{equation*}

for $d\ge d_0(\eps)$ large enough, and by symmetry, so do the values $\ell(T)=m^{d+1}-\lambda_T$. This concludes the proof of Propositions \ref{prop:toggles} and \ref{prop:long_interval}.

\section{Construction: combination of multiple parts}\label{sec:constr_combination}

Using the main structure described in the previous section, this section is devoted to completing the proof of the lower bound of Theorem \ref{thm:main}. We combine several copies of our structure to create our final variable graph $G$.

We will take a sequence of values $n=n(D)\to \infty$ indexed by a parameter $D$ that can be equal to any sufficiently large integer. Let $M=2^{D+1}$, and fix the left-hand class to be $G_L=\overline{K_{M^{D+1}}}$, that is, the empty graph on $n_0:=2^{D^2+2D+1}$ vertices.

The construction will be in the spirit of Lemma \ref{lem:join_with_multiple}: we let $G_R$ be the full join of $\ell$ graphs $$G_R=G^{(1)}_R\triangledown G^{(2)}_R \triangledown \dots\triangledown G^{(\ell)}_R,$$ where each graph $G^{(j)}_R$ is isomorphic to $(d_j+1)K_{m_j}$ for some integers $d_j$ and $m_j$, with $m_j>2^{d_j}$. We will use Proposition \ref{prop:long_interval} for each $\Sp(G_L, G^{(j)}_R)$, and then use the lemma to combine the resulting spectra.

The skeleton of our argument is described in the following proposition.

\begin{proposition}\label{prop:combined_skeleton}
Let $n_0\in \zz^{+}$, and take integer intervals $J,Y_0,Y_1,\ldots,Y_k,Z_1,\ldots,Z_k\subseteq [0, n_0]$,  graphs $A_0,A_1,\ldots,A_k$ and integers $n_1, \ldots, n_k\in [n_0]$ such that
\begin{itemize}
\itemsep0em
\item $J\subseteq Y_0\cup Y_1\cup \ldots\cup Y_k\cup Z_1\cup \ldots\cup Z_k$,
\item $Z_j=n_0-Y_j$ for all $1\le j\le k$,
\item there exists $t_j\in [0, n_0-n_j]$ for all $1\le j\le k$ such that $Z_j=Y_j+t_j$,
\item there exist numbers $c_0, c_1, \ldots, c_{k}\in \nn$ such that for each $0\le j\le k$, $$\Sp(\overline{K_{n_j}}, A_j)\supseteq \cB(Y_j)+c_j.$$
\end{itemize}

Then there exists a constant $c\in \nn$ such that $$\Sp(G_L, G_R)\supseteq \cB(J)+c,$$ where:
\begin{itemize}
\itemsep0em
\item $G_L=\overline{K_{n_0}}$,
\item $G_R=A_0\triangledown A_1\triangledown \ldots\triangledown A_k\triangledown B_1\triangledown \ldots\triangledown B_k$, with $B_j$ being an isomorphic copy of $A_j$ for each $1\le j\le k$.
\end{itemize}
\end{proposition}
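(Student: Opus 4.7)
The plan is to use the given spectral inclusion for each $A_j$ twice — once through $A_j$ itself, contributing $\cB(Y_j)$, and once through the isomorphic copy $B_j$, which can be made to contribute $\cB(Z_j)$ — after lifting each inclusion from $\overline{K_{n_j}}$ to the full left-hand class $G_L = \overline{K_{n_0}}$. Lemma~\ref{lem:join_with_multiple} will then combine these into a single inclusion for $\Sp(G_L, G_R)$, and the covering hypothesis lets me reassemble $\cB(J)$ from the summands.

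First I would lift each inclusion. For $j = 0$ the given inclusion $\Sp(G_L, A_0) \supseteq \cB(Y_0) + c_0$ already lives on $G_L$, so no work is needed. For $1 \le j \le k$, applying Lemma~\ref{lem:spectrumscaling}(b) to the decomposition $G_L = \overline{K_{n_j}} \sqcup \overline{K_{n_0 - n_j}}$ immediately gives $\Sp(G_L, A_j) \supseteq \cB(Y_j) + d_j^{(Y)}$ for an explicit $d_j^{(Y)} \in \nn$. For the copy $B_j \cong A_j$, I would apply Lemma~\ref{lem:spectrumscaling}(a) first with $t = t_j$ (legal since $t_j \in [0, n_0 - n_j]$) to obtain
$$\Sp(\overline{K_{n_j + t_j}}, B_j) \supseteq 2^{t_j}(\cB(Y_j) + c_j) = \cB(Y_j + t_j) + 2^{t_j} c_j = \cB(Z_j) + 2^{t_j} c_j,$$
using the hypothesis $Z_j = Y_j + t_j$ together with the identity $2^t\cB(Y) = \cB(Y+t)$. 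A subsequent application of part (b) with $t = n_0 - n_j - t_j$ then lifts this to $\Sp(G_L, B_j) \supseteq \cB(Z_j) + d_j^{(Z)}$ for some $d_j^{(Z)} \in \nn$.

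Next, Lemma~\ref{lem:join_with_multiple} applied to the $(2k+1)$-part full join $G_R = A_0 \triangledown A_1 \triangledown \cdots \triangledown A_k \triangledown B_1 \triangledown \cdots \triangledown B_k$, combined with the above inclusions, yields
$$\Sp(G_L, G_R) \supseteq \cB(Y_0) + \sum_{j=1}^k \cB(Y_j) + \sum_{j=1}^k \cB(Z_j) + c^*,$$
where $c^* := c_0 + \sum_{j=1}^k d_j^{(Y)} + \sum_{j=1}^k d_j^{(Z)} - 2k \cdot 2^{n_0}$; nonnegativity of $c^*$ is automatic, since each $\cB$-summand contains $0$ and $\Sp(G_L,G_R) \subseteq \nn$. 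Finally, the covering hypothesis allows me to partition $J$ disjointly as $J = P_0 \sqcup \bigsqcup_{j=1}^k P_j \sqcup \bigsqcup_{j=1}^k Q_j$ with $P_j \subseteq Y_j$ and $Q_j \subseteq Z_j$, by assigning each integer of $J$ to any one interval containing it. Since $\cB(J_1) + \cB(J_2) = \cB(J_1 \cup J_2)$ for disjoint $J_1, J_2$, this gives $\cB(J) = \cB(P_0) + \sum_j \cB(P_j) + \sum_j \cB(Q_j) \subseteq \cB(Y_0) + \sum_j \cB(Y_j) + \sum_j \cB(Z_j)$, so $\cB(J) + c^* \subseteq \Sp(G_L, G_R)$ with $c := c^*$, as required.

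The only real difficulty is bookkeeping of constants through the chain of scaling and join manipulations; the conceptual skeleton is entirely prescribed by Lemmas~\ref{lem:spectrumscaling} and~\ref{lem:join_with_multiple}, together with the disjoint-support identity for $\cB$, so there is no substantive conceptual obstacle beyond careful execution.
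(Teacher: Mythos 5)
Your proof is correct and follows essentially the same route as the paper: lift each $\Sp(\overline{K_{n_j}}, A_j)$ to $\Sp(\overline{K_{n_0}},\cdot)$ via Lemma~\ref{lem:spectrumscaling}, use one copy to realise $\cB(Y_j)$ and the other to realise $\cB(Z_j)$ by shifting through $2^{t_j}\cB(Y_j)=\cB(Y_j+t_j)=\cB(Z_j)$, combine with Lemma~\ref{lem:join_with_multiple}, and then recover $\cB(J)$ from the sumset. The only cosmetic difference is that the paper makes $A_j$ carry $\cB(Z_j)$ and $B_j$ carry $\cB(Y_j)$, whereas you do the reverse; since $A_j\cong B_j$ this is immaterial. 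You also spell out the disjoint partition $J=P_0\sqcup\bigsqcup P_j\sqcup\bigsqcup Q_j$ and the subsequent inclusion $\cB(J)\subseteq\cB(Y_0)+\sum\cB(Y_j)+\sum\cB(Z_j)$, and the nonnegativity of the resulting shift constant, which the paper leaves implicit — a welcome bit of extra rigour, not a deviation in method.
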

\begin{proof}
 Firstly we will use Lemma \ref{lem:spectrumscaling} that pads the left-hand class with further isolated vertices, allowing us to obtain a statement about $\Sp(\overline{K_{n_0}}, A_j)$ instead of $\Sp(\overline{K_{n_j}}, A_j)$. Letting $t'_j=n_0-n_j-t_j$, we have
 \begin{itemize}
 \itemsep0em
 \item $\Sp(\overline{K_{n_0}}, A_j)\supseteq 2^{t_j} \Sp(\overline{K_{n_j}}, A_j) + (2^{t'_j}-1)2^{n_j+t_j}\supseteq \cB(Z_j)+2^{t_j}\cdot c_j+(2^{t'_j}-1) 2^{n_j+t_j}=\cB(Z_j)+c^{(1)}_j$,
 \item $\Sp(\overline{K_{n_0}}, B_j)\supseteq \Sp(\overline{K_{n_j}}, B_j) + (2^{n_0-n_j}-1)2^{n_j}\supseteq \cB(Y_j)+c^{(2)}_j$
 \end{itemize}

 for some constants $c^{(1)}_j$ and $c^{(2)}_j$, where the first claim comes from applying part (a) $t_j$ times and then part (b) $t'_j$ times.

 Let us combine these spectra using Lemma \ref{lem:join_with_multiple}:
 \begin{equation*}
 \Sp(G_L, G_R)\supseteq \cB(Y_0) + c_0 + \sum_{j=1}^k \left[\cB(Y_j) + \cB(Z_j) + c_j^{(1)} + c_j^{(2)} \right] - 2k\cdot 2^{n_0}\supseteq \cB(J)+c. \qedhere
 \end{equation*}
\end{proof}

\begin{proof}[Proof of Theorem \ref{thm:main}, lower bound]
Fix a global constant $\delta>0$ to be sufficiently small. Then fix an arbitrary positive integer $D>d_0$, where $d_0=d_0(\delta)$ is sufficiently large, and take $M=2^{D+1}$. We are going to use Proposition \ref{prop:combined_skeleton} for $n_0:=M^{D+1}=2^{(D+1)^2}$, where instead of indices $1\le j\le k$ for $Y_j,Z_j,A_j,n_j$, we will use ordered pairs $(d,q)$ of integers satisfying $d\in [d_0, D-1]$ and $q\in [2d+3]$. So altogether,

$$k=\sum_{d=d_0}^{D-1} (2d+3)=D^2+2D-d_0^2-2d_0.$$

Firstly fix $Y_0:=\left[\left\lceil \left(\frac14+\delta\right) 2^{(D+1)^2}\right\rceil, \left\lfloor \left(\frac34-\delta\right) 2^{(D+1)^2}\right\rfloor \right]$, and correspondingly take $A_0=(D+1)K_M$. By Proposition \ref{prop:long_interval} for $\eps=4\delta$, $\Sp(\overline{K_{n_0}}, A_0)\supseteq \cB(Y_0)+c_0$ for some $c_0$.

Now for each $d\in [d_0, D-1]$ and $q\in [2d+3]$, we will define a positive integer $m_{d,q}\in [2^{d+1}, 2^{d+3}]$ such that by Proposition \ref{prop:long_interval} for $\eps=4\delta$, we will have
$$\Sp(\overline{K_{n_{d,q}}}, A_{d,q})\supseteq \cB(Y_{d,q})+c_{d,q}$$

for a constant $c_{d,q}$, where
\begin{itemize}
\itemsep0em
\item $A_{d,q}:=(d+1) K_{m_{d,q}}$,
\item $n_{d,q}:=m_{d,q}^{d+1}$.
\item $Y_{d,q}:=\left[\left\lceil \frac13\cdot 2^{(d+2)^2-q}\right\rceil, \left\lfloor \frac23\cdot 2^{(d+2)^2-q}\right\rfloor \right]$.
\end{itemize}

The intervals $Y_{d,q}$ for a single value $d\in [d_0, D-1]$ cover the interval $$\left[\left\lceil \frac13\cdot 2^{(d+1)^2}\right\rceil, \left\lfloor \frac23\cdot 2^{(d+2)^2-1}\right\rfloor \right]$$ without gaps, and altogether with $Y_0$, the intervals for all $d$ cover $$\left[\left\lceil \frac13\cdot 2^{(d_0+1)^2}\right\rceil, \left\lfloor\frac23\cdot 2^{(D+1)^2}\right\rfloor \right].$$

Together with the reflections $Z_{d,q}:=2^{(D+1)^2}-Y_{d,q}$, the intervals cover $J:=[a, 2^{(D+1)^2}-a]$, where $a:=\left\lceil \frac13\cdot 2^{(d_0+1)^2}\right\rceil$ is a constant depending on $\delta$.

We apply Proposition \ref{prop:combined_skeleton} for $G_L=\overline{K_{2^{(D+1)^2}}}$ and $G_R=A_0\triangledown A_{d_0,1}\triangledown \ldots\triangledown A_{D-1,2d+3}\triangledown B_{d_0,1}\triangledown \ldots\triangledown B_{D_1,2d+3}$, with $B_{d,q}$ being an isomorphic copy of $A_{d,q}$ for each $d\in [d_0,D-1]$ and $q\in [2d+3]$.

To satisfy the conditions of this proposition, we need to have $Z_{d,q}=Y_{d,q}+t_{d,q}$ for some $0\le t_{d,q} \le 2^{(D+1)^2}-m_{d,q}^{d+1}$, which is equivalent to the following condition:

\begin{enumerate}[label=(\arabic*)]
\item $2^{(d+2)^2-q}\ge m_{d,q}^{d+1}$.
\end{enumerate}

Note that Proposition \ref{prop:long_interval} for $\eps=4\delta$ gives 
$$\Sp(\overline{K_{n_{d,q}}}, A_{d,q})\supseteq  \cB\left(\left[\left\lceil\left(\frac14+\delta\right)2^{d+1}m_{d,q}^d\right\rceil,~ \left\lfloor m_{d,q}^{d+1}-\left(\frac14+\delta\right)2^{d+1}m_{d,q}^d\right\rfloor\right]\right)+c_{d,q}.$$

So all that remains is to define the values $m_{d,q}$ for all $d,q$ so that $Y_{d,q}$ is contained in the interval appearing in this expression. It suffices to make sure that:
\begin{enumerate}[label=(\arabic*)]
\itemsep0em
\setcounter{enumi}{1}
\item $\left(\frac14+\delta\right)2^{d+1}m_{d,q}^d\le \frac13\cdot 2^{(d+2)^2-q}$,
\item $m_{d,q}^{d+1}-\left(\frac14+\delta\right)2^{d+1}m_{d,q}^d\ge \frac23\cdot 2^{(d+2)^2-q}$.
\end{enumerate}

Observe that $\left(\frac14+\delta\right)2^{d+1}m_{d,q}^d\le \left(\frac14+\delta\right)m_{d,q}^{d+1}$, since $m_{d,q}\ge 2^{d+1}$. So it suffices to have:
\begin{enumerate}[label=(\arabic*')]
\itemsep0em
\setcounter{enumi}{1}
\item $\left(\frac14+\delta\right)m_{d,q}^{d+1}\le \frac13\cdot 2^{(d+2)^2-q}$,
\item $\left(\frac34-\delta\right)m_{d,q}^{d+1}\ge \frac23\cdot 2^{(d+2)^2-q}$.
\end{enumerate}

These conditions, together with (1), amount to $\left(\frac89+\delta'\right)\cdot 2^{(d+2)^2-q}\le m_{d,q}^{d+1}\le  2^{(d+2)^2-q}$, where $\delta'$ is a function of $\delta$ with $\delta'\to 0$ as $\delta\to 0$. 

In the cases $q\in \{1, 2d+3\}$, the choices $m_{d,1}=2^{d+3}$ and $m_{d,2d+3}=2^{d+1}$ can be seen to work. In the remaining cases $q\not\in\{1, 2d+3\}$, there exists an integer $m_{d,q}\in [2^{d+1}, 2^{d+3}]$ satisfying these inequalities, since:
\begin{itemize}
\itemsep0em
\item for any two neighbouring values $m$ and $m+1$ in this interval, $\frac{(m+1)^{d+1}}{m^{d+1}}\le \exp\left(\frac{d+1}{m}\right)\le \exp\left(\frac{d+1}{2^{d+1}}\right)\le \frac{1}{\frac89+\delta'}$ for $d\ge 5$ and $\delta'$ small enough;
\item the extremal choices $m=2^{d+1}$ and $m=2^{d+3}$ overshoot the endpoints of the required interval, since:
\begin{itemize}[label=$\circ$]
\itemsep0em
\item $2^{(d+1)^2}<\left(\frac89+\delta'\right)\cdot 2^{d^2+2d+2}$,
\item $2^{(d+1)(d+3)}>2^{d^2+4d+2}$.
\end{itemize}
\end{itemize}

So the requirements of Proposition \ref{prop:combined_skeleton} are met, and we altogether get $\Sp(G_L,G_R)\supseteq \cB(J)+c$ for a constant $c$, meaning that $|\Sp(G_L,G_R)|\ge 2^{2^{(D+1)^2}-2a}$.

Altogether, the number of vertices in our construction is
\begin{equation*}
\begin{split}
n &= |V_L| + |V_R| \\
&= 2^{(D+1)^2} + (D+1)2^{D+1} + 2\left(\sum_{d=d_0}^{D-1} \sum_{q=1}^{2d+3} (d+1)m_{d,q}\right) \\
&= 2^{(D+1)^2} + \Theta(D^2\cdot 2^D),
\end{split}
\end{equation*}

and we obtained that the Fibonacci number of such an $n$-vertex graph $G$ can take on at least $$\scalebox{1.25}{$2^{2^{(D+1)^2}-2a}=2^{n-2^{(1+o(1))\sqrt{\log n}}}$}$$ values, attaining the required lower bound.
\end{proof}

\section{Application to $[k,1]$-avoiders}\label{sec:avoiders}

The bounds on the size of the Fibonacci spectrum described in this article can be used in relation to a previous question raised by the authors \cite{KN25}, which is a generalization of several well-known problems in additive combinatorics and finite geometry, such as finding the minimum size of a blocking set or the maximum size of a cap set.
We recall the problem investigated in this paper \cite{KN25} and formulate it in the binary vector space $\ff_2^n$, although a similar question can be asked over any finite field. A $k$-dimensional affine subspace in $\ff_2^n$ is called a \textit{$k$-flat}.

\begin{question}[The avoidance problem for $ \lb k,t\rb $-flats, Kovács, Nagy \cite{KN25}]
We are given integers $n,m,k,t$ with $1\le k\le n$, $~0\le m\le 2^n$ and $0\le t\le 2^k$. Is it true that for any set $S$ of cardinality $m$ in $\ff_2^n$, there exists a $k$-flat $F_k\subseteq \ff_2^n$ that intersects $S$ in exactly $t$ points?
\end{question}

Such a flat is referred to as a \textit{$[k,t]$-flat induced by $S$}. If $S$ does not induce a $[k,t]$-flat, it is a \textit{$[k,t]$-avoider}. The present authors investigated the \textit{$(n;k,t)$-spectrum} $$\Sp(n;k,t)=\{0\le m\le 2^n : \text{every $S$ with $|S|=m$ induces a $[k,t]$-flat}\},$$ and conjectured that for any fixed pair $(k,t)$, we have $$|\Sp(n;k,t)|=(1-o(1))2^n$$ as $n\to \infty$.

In \cite{KN25}, the conjecture was shown true for $t\in \{0, 2^{k-1}, 2^{k}\}$ when $k\ge 3$, and for all $t$ when $k\in \{1,2\}$. A lower bound was also given on the spectrum size when $t$ is of the form $2^a$ or $3\cdot 2^a$ for an integer $1\le a\le k-2$.

In \cite{codebased}, the first author primarily investigated the smallest undecided case $(k,t)=(3,1)$. As an application of the general probabilistic lower bound from our joint paper \cite{KN25}, which was based on an idea of Guruswami \cite{Guruswami}, he obtained the following result.

\begin{theorem}[\cite{codebased}]\label{thm:old_31spectrum}
There exists $K>0$ such that for sufficiently large $n$, $$|\Sp(n; 3,1)|\le 2^n - K\cdot 2^{n/2}.$$
\end{theorem}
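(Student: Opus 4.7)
My plan is to prove the bound via a direct pair-counting union bound, which in fact yields an initial interval of length $\Omega(2^n)$ inside $\Sp(n;3,1)$, comfortably dominating $K\cdot 2^{n/2}$.

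First, I would fix any $S\subseteq \ff_2^n$ with $|S|=m$ and any point $s\in S$, and count the 3-flats passing through $s$. These correspond to the 3-dimensional linear subspaces of $\ff_2^n$, so there are
\[ \binom{n}{3}_2 = \frac{(2^n-1)(2^{n-1}-1)(2^{n-2}-1)}{21} \]
of them. For a second point $s'\in S\setminus\{s\}$, the 3-flats through both correspond to 3-dimensional subspaces containing the nonzero vector $s'-s$, giving $\binom{n-1}{2}_2 = (2^{n-1}-1)(2^{n-2}-1)/3$ such flats. The ratio of the two counts simplifies to $(2^n-1)/7$.

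Next, I would apply a union bound. The number of 3-flats through $s$ meeting $S$ in at least one further point is at most $(m-1)\binom{n-1}{2}_2$, so whenever $m-1<(2^n-1)/7$, some 3-flat through $s$ meets $S$ only at $s$; this is precisely a $[3,1]$-flat induced by $S$. Thus every integer $m$ with $1\le m\le \lfloor 2^n/8 \rfloor$ lies in $\Sp(n;3,1)$, and consequently $|\Sp(n;3,1)|\ge \lfloor 2^n/8\rfloor$, which exceeds $K\cdot 2^{n/2}$ for every fixed $K>0$ once $n$ is large.

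There is essentially no obstacle for the bound as stated, since the only ingredients are standard Gaussian binomial identities. The genuine difficulty, if one wished to approach the full conjecture $|\Sp(n;3,1)|=(1-o(1))2^n$, arises once $m$ grows past $\sim 2^n/7$, where the pairwise union bound fails and the ``bad'' estimate can exhaust all 3-flats through $s$; one would then have to rule out $[3,1]$-avoiders by a less quantitative method, typically via a second-moment or Fourier-analytic argument, or via the Guruswami-style probabilistic lower bound of \cite{KN25}.
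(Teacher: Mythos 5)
Your pair-counting calculation is arithmetically correct, but the quantity it bounds from below is not the one this theorem is about, and the suspiciously strong conclusion should have been a warning. Your bound $|\Sp(n;3,1)|\ge\lfloor 2^n/8\rfloor$ would overshoot not just $K\cdot 2^{n/2}$ but also the paper's \emph{own improvement}, Theorem \ref{thm:new_31spectrum}, whose bound $2^{n-2^{(1+o(1))\sqrt{\log n}}}$ is $o(2^{n-3})$ as $n\to\infty$; an eight-line union bound beating the headline application of the entire paper should prompt a re-reading of the definitions. And indeed, the way $\Sp(n;3,1)$ is actually used in Section \ref{sec:avoiders} forces it to be interpreted as the set of cardinalities realized by $[3,1]$-\emph{avoiders}: Theorem \ref{thm:new_31spectrum} is obtained by feeding the lower bound on $\cNi(n)$ from Theorem \ref{thm:main} through Construction \ref{constr:31avoider}, which manufactures a $[3,1]$-avoider of size $2^n-i(G)$ for each graph $G$, and many distinct values of $i(G)$ translate to many distinct avoider cardinalities, hence a \emph{lower} bound on $|\Sp(n;3,1)|$ only under the avoider-cardinality reading. (Under the literal phrasing of the definition, an avoider of size $2^n-i(G)$ would instead witness $2^n-i(G)\notin\Sp(n;3,1)$, giving an upper bound, so that derivation would be incoherent.)

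Read in the way that makes Section \ref{sec:avoiders} consistent, your argument proves the opposite inequality. Fixing $s\in S$ and observing that the $3$-flats through $s$ hitting a second point of $S$ number at most $(m-1)\binom{n-1}{2}_2<\binom{n}{3}_2$ whenever $m-1<(2^n-1)/7$ shows exactly that \emph{no} $[3,1]$-avoider exists of any size $1\le m\le\lfloor 2^n/8\rfloor$; that is, $\Sp(n;3,1)\cap[1,\lfloor 2^n/8\rfloor]=\emptyset$, which gives the upper bound $|\Sp(n;3,1)|\le 2^n+1-\lfloor 2^n/8\rfloor$ rather than the lower bound required. A first-moment union bound is the right tool for ruling out avoiders at small cardinalities, but it cannot establish their \emph{existence} at many cardinalities, which is what $|\Sp(n;3,1)|\ge K\cdot 2^{n/2}$ demands. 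The cited proof in \cite{codebased}, built on the Guruswami-style probabilistic lower bound from \cite{KN25}, goes in the constructive direction: it produces (randomly or via linear codes, and later via Construction \ref{constr:31avoider}) families of $[3,1]$-avoiders realizing at least $K\cdot 2^{n/2}$ distinct sizes. Your proposal contributes nothing towards that count and would need to be replaced wholesale by a construction or an existence argument for avoiders.
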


\noindent Furthermore, he gave an explicit construction of $[3,1]$-avoiders based on linear codes, which still gives exponentially many different sizes outside the spectrum.
He also suggested a different construction that uses the independent sets of an arbitrary graph $G$ on $n$ vertices to construct a $[3,1]$-avoider:
\begin{construction}[\cite{codebased}]\label{constr:31avoider}
Let $G=([n],E)$ be a graph. If for every edge $e=uv\in E$ we consider the $(n-2)$-flat $F_e=\{\mathbf{x}\in \ff_2^n: \mathbf{x}_u=\mathbf{x}_v=1\}$, then $S_G=\cup_{e\in E} F_e$ is a $[3,1]$-avoider in $\ff_2^n$ with $|S_G|=2^n-i(G)$.
\end{construction}

Using this construction in tandem with Theorem \ref{thm:main} of the current paper, we now obtain the following improvement to Theorem \ref{thm:old_31spectrum}:
\begin{theorem}\label{thm:new_31spectrum}
For infinitely many $n$, we have 
$$\scalebox{1.25}{$|\Sp(n; 3,1)|\le 2^n - 2^{n-2^{(1+o(1))\sqrt{\log n}}}$}.$$
\end{theorem}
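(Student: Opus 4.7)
The plan is to combine Construction~\ref{constr:31avoider} directly with the lower-bound half of Theorem~\ref{thm:main}. For any graph $G$ on the labelled vertex set $[n]$, Construction~\ref{constr:31avoider} produces a $[3,1]$-avoider $S_G \subseteq \ff_2^n$ of cardinality $|S_G| = 2^n - i(G)$. Choosing an arbitrary labelling for each unlabelled graph in $\cG_n$, the assignment $G \mapsto |S_G|$ takes at least $\cNi(n)$ distinct values, since the map $i(G) \mapsto 2^n - i(G)$ is injective for fixed $n$. Each of these values is, by definition, an element of $\Sp(n;3,1)$, so $|\Sp(n;3,1)| \ge \cNi(n)$.

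It then suffices to apply the lower bound of Theorem~\ref{thm:main}, which gives $\cNi(n) \ge 2^{n - 2^{(1+o(1))\sqrt{\log_2 n}}}$ for infinitely many $n \in \zz^{+}$, yielding the desired bound on $|\Sp(n;3,1)|$ for the same infinite set of values of $n$. There is essentially no new obstacle here: all the combinatorial content has been carried out in Sections~\ref{sec:constr_main}--\ref{sec:constr_combination} for Theorem~\ref{thm:main}, and Construction~\ref{constr:31avoider} from \cite{codebased} is already available. The only routine point to verify is that the sizes $2^n - i(G)$ always lie in $[0, 2^n]$, which is immediate from $n+1 \le i(G) \le 2^n$.
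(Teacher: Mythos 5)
Your proof is precisely the argument the paper intends but leaves implicit: Construction~\ref{constr:31avoider} converts each graph $G$ on $[n]$ into a $[3,1]$-avoider of size $2^n-i(G)$, so that $|\Sp(n;3,1)|\ge \cNi(n)$, and the lower bound of Theorem~\ref{thm:main} then gives the claimed estimate for the same infinite sequence of $n$. This matches the paper's approach exactly.
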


The same bound applies for $[k,1]$-avoiders for all $k\ge 4$, because $[3,1]$-avoiders also avoid $[k,1]$-flats.

\section{Concluding remarks}

In this paper, we showed that $\frac{\log_2\cN i(g(n))}{g(n)}\to 1$ holds in a strong sense for a subsequence $g(n)$ of the natural numbers. 
While our main construction was built on a vertex set of size belonging to just an infinite sequence, we believe that the following also holds: for every $\eps>0$, there is a constant $K_{\eps}$ such that for all $n$, $$\cNi(n)\ge K_{\eps} 2^{n(1-\eps)}.$$ 
It also remained an open problem to determine the growth of 
$2^n/\cN i(n)$.

The main problem has a natural extension for hypergraphs as well. For a hypergraph $H=(V,\mathcal{E})$, a subset $W\subseteq V$ is an  \textit{independent set} if $W$ does not contain any  hyperedge of $H$. The rank of the hypergraph is the maximum cardinality of any of the edges of the hypergraph. 

\begin{question}\label{q:runiform}
Let $r\ge 2$. How many values can $i(H)$ admit for hypergraphs $H$ of rank at most $r$ on $n$ vertices?
\end{question}

As a special case, one might investigate only the $r$-uniform hypergraphs.

In \cite{codebased}, the first author showed that a result on Question \ref{q:runiform} has an analogous application to the $[k,1]$-avoider problem using an analogue statement to Construction \ref{constr:31avoider}. From the general construction of \cite{codebased}, we have that for $n\ge k\ge 3$, if $H$ is any hypergraph on $[n]$ of rank at most $k-1$, there is a $[k,1]$-avoider $S_H\subseteq \ff_2^n$ with $|S_H|=2^n-i(H)$. Therefore, a lower bound for Question \ref{q:runiform} immediately implies an upper bound on $|\Sp(n;k,1)|$ as described in Section \ref{sec:avoiders}.

Our result has some consequences in relation to the theory of independence polynomials as well.
Levit and Mandrescu \cite{Levit2} showed several examples of non-isomorphic graph pairs which have the same independence polynomial. In fact, there are graphs $G$ for which $I_G(x) \equiv I_{\overline{G}}(x)$ but $G\not\cong\overline{G}$ \cite{Hoede}.
Thus it is natural to pose the following question:
\begin{problem}
    What is the order of magnitude of $|\{I_G: G\in \cG_n\}|$?
\end{problem}
Our result can also be interpreted as an exponential lower bound on the number of distinct independence polynomials of $n$-vertex graphs with base arbitrarily close to $2$.

In the same spirit, it would be interesting to determine   $|\{ i(G): G\in \cG_n^*\}|$ for some other more restricted graph families $\cG_n^*$ of $n$-vertex graphs.

A closely related problem would be to determine the set of cardinalities of the total number of independent $k$-sets that can be attained in $n$-vertex graphs. Observe that this question concerns the value of the coefficient $i_k$ of $x^k$ in the independence polynomial $I_G(x)$. This problem has been addressed very recently by Kittipassorn and Popielarz \cite{Kitti} in a more general framework, as they  proved results for the set of the possible number of copies of a fixed connected graph $F$ in a graph on $n$ vertices. Setting $F$ to be a clique would give back the previously mentioned variant of our main problem.\\

\noindent \textbf{Acknowledgement}.
We are grateful to Ferenc Bencs for the fruitful discussion on the topic.

{\footnotesize
}
\end{document}